\newcommand*{\field}[1]{\mathbb{#1}}%
\newdimen\slantmathcorr
\def\oversl#1{
\setbox0=\hbox{$#1$}
\slantmathcorr=\wd0
\hskip 0.2\slantmathcorr \overline{\hbox to 0.8\wd0{
\vphantom{\hbox{$#1$}}}}
\hskip-\wd0\hbox{$#1$}
}
\newtheorem{theorem}{Theorem}[section]
\newtheorem{Lemma}[theorem]{Lemma}
\newtheorem{Proposition}[theorem]{Proposition}
\newtheorem{Corollary}[theorem]{Corollary}
\newtheorem{Definition}[theorem]{Definition}
\newtheorem{Example}[theorem]{Example}
\newtheorem{Remark}[theorem]{Remark}
\begin{document} 
\title[Specification for group actions on Uniform spaces]{Specification for group actions on Uniform spaces} 
\author[Abdul Gaffar Khan, Pramod Kumar Das, Tarun Das]{Abdul Gaffar Khan$^{1}$, Pramod Kumar Das$^{2}$ and Tarun Das$^{1}$}                 
\subjclass[2010]{Primary 37B40 ; Secondary 37B20}
\keywords{Specification, Topological Entropy, Bowen Entropy, Uniform Spaces \vspace*{0.08cm}\\ 
\vspace*{0.01cm}
\Letter{Tarun Das} \\
\vspace*{0.08cm}
tarukd@gmail.com \\
\vspace*{0.01cm}
Abdul Gaffar Khan \\
\vspace*{0.08cm}
gaffarkhan18@gmail.com\\
\vspace*{0.01cm}
Pramod Kumar Das\\
\vspace*{0.08cm}
pramodkumar.das@nmims.edu\\
\vspace*{0.01cm}
\textit{$^{1}$Department of Mathematics, Faculty of Mathematical Sciences, University of Delhi, Delhi, India.}\\ 
\hspace*{0.11cm}\textit{$^{2}$School of Mathematical Sciences, Narsee Monjee Institute of Management Studies, Vile Parle, Mumbai-400056,  India.} 
}

\begin{abstract}
We extend specification and periodic specification to finitely generated group actions on uniform spaces using a concept of specification point. We prove that certain group actions having two distinct specification points have positive entropy. We further prove that if a group containing an infinite order element acts on an infinite Hausdorff uniform space and the action possesses periodic specification, then it is Devaney chaotic. 
\end{abstract}
\maketitle 

\section{Introduction}
One of the most important and extensively studied variant of shadowing in discrete topological dynamics is known as specification. The motivation for this concept comes from the wish to trace simultaneously, finite number of finite pieces of orbits by one periodic orbit. Bowen proved \cite{B} the usefulness of specification by studying the distribution of periodic points in the phase space. The periodicity of the tracing point made this notion to be popularly known as periodic specification among many other variants. The connection of this notion with the notion of chaos was established \cite{S} much later than its introduction \cite{B} by Bowen. One can see that most of the studies regarding this important variant of shadowing are on compact metric spaces. An interested reader can find a significant amount of literature in \cite{N}. 
\medskip

In recent years, a group of mathematicians from all over the world working to understand the validity of several important results for systems with non-compact, non-metrizable phase space. Such study gained desirable attention with the publication of \cite{DLRW} in which authors proved Walters' stability theorem and a similar version of Smale's spectral decomposition theorem for homeomorphisms on uniform spaces. In \cite{DD, SDD, SDD2016} authors studied periodic specification, weak specification and pseudo orbital specification for uniformly continuous maps on uniform spaces. At the same time, mathematicians started investigating shadowing, specification and their variants for group and semigroup actions on uniform spaces \cite{SC, DD2019, RV, S2019}.   
\medskip

One of the aim of this paper is to prove that periodic specification for certain group actions on infinite Hausdorff uniform space is Devaney chaotic. Another aim is to formulate the notion of Bowen entropy for finitely generated group actions on uniform spaces and then relate the positivity of entropy with the presence of specification.      
\medskip

In language of physics, entropy is a thermodynamical property which represents the amount of system's thermal energy lacks to convert into mechanical work. Mathematically, it is a non-negative extended real number representing amount of information obtained by performing an experiment repeatedly. If entropy is positive, then the system is chaotic. On the other hand, a deterministic system does have zero entropy. The classification problem of dynamical systems motivated mathematicians to introduce \cite{AKM} the notion of topological entropy for continuous surjective map on compact topological space similarly as the measure theoretic entropy \cite{W} for measure preserving transformations of measure spaces. Bowen extended \cite{W} this notion to uniformly continuous map on metric spaces. In \cite{H}, B. M. Hood showed that Bowen's definition is already suited for uniform spaces. 

\section{Preliminaries}

A uniform space is a pair $(X,\mathcal{U})$, where $X$ is a non-empty set and $\mathcal{U}$ is a collection of subsets of $X\times X$ satisfying the following properties. 
\begin{enumerate}
\item[(1)] Every $D\in\mathcal{U}$ contains $\Delta(X)$.
\item[(2)] If $D\in\mathcal{U}$ and $E\supset D$, then $E\in\mathcal{U}$.
\item[(3)] If $D,D'\in\mathcal{U}$, then $D\cap D'\in\mathcal{U}$.
\item[(4)] If $D\in\mathcal{U}$, then $D^{-1}\in\mathcal{U}$. 
\item[(5)] For every $D\in\mathcal{U}$ there is a symmetric $D'\in\mathcal{U}$ such that $D'\circ D'\subset D$.
\end{enumerate} 
\medskip

Where $\Delta(X)=\lbrace (x,x)\mid x\in X\rbrace$, $D^{-1}=\lbrace (y,x)|(x,y)\in D\rbrace$ and $D'\circ D'=\lbrace(x,y)\in X\times X |$ there is $z\in X$ satisfying $(x,z)\in D$ and $(z,y)\in D'\rbrace$. 
\medskip 

If $(X,\mathcal{U})$ is a uniform space, then we can generate a topology on $X$ by characterizing that a subset $Y\subset X$ is open if and only if for each $x\in Y$ there is an entourage $U\in\mathcal{U}$ such that the cross section $U[x]=\lbrace y\in X\mid (x,y)\in U\rbrace\subset Y$. Thus, for any point $x\in X$ and any neighbourhood $\mathfrak{G}$ of $x$, one can find $U\in\mathcal{U}$ such that $U[x]\subset \mathfrak{G}$. The members of $\mathcal{U}$ are called entourages. An entourage $D\in\mathcal{U}$ is said to be symmetric if $D=D^{-1}$. Observe that for any $U\in\mathcal{U}$, the entourage $U\cap U^{-1}$ is symmetric and $U^{n}=\lbrace (x,y)\mid z_{0}=x,z_{1},...,z_{n}=y \in X$ such that $(z_{i-1},z_{i})\in U$ for all $i\in \lbrace 1,2,...,n\rbrace \rbrace$. The set of all symmetric entourages and the set of all open symmetric entourages are denoted by $\mathcal{U}^s$ and $\mathcal{U}^0$ respectively. Two uniformities $\mathcal{U}$ and $\mathcal{V}$ on $X$ are called uniformly equivalent if identity maps $I_{1}: (X,\mathcal{U})\rightarrow (X,\mathcal{V})$ and $I_{2}: (X,\mathcal{V})\rightarrow (X,\mathcal{U})$ are uniformly continuous.  
\medskip
  
A non-empty family of non-empty subsets of $X$ is a filter base if the intersection of every pair of members of the family contains another member of the family. Observe that both $\mathcal{U}^s$ and $\mathcal{U}^0$ are filter bases for the uniformity $\mathcal{U}$. A directed set is a non-empty set $D$ together with a reflexive and transitive binary relation $\leq$ such that for any $x, y\in D$ there exists $z\in D$ satisfying $x\leq z$ and $y\leq z$. Observe that both $\mathcal{U}^s$ and $\mathcal{U}^0$ are directed sets with reverse set inclusion as the binary relation. A net in $X$ is a function $f$ from a directed set $D$ into $X$. Let $A$ be a set and $\leq$ be a binary relation on $A$. Then, a subset $B\subset A$ is said to be cofinal in $A$ if for every $a\in A$, there is $b\in B$ such that $a\leq b$. It is easy to see that $\mathcal{U}^0$ is cofinal in $\mathcal{U}^s$.  
\medskip

Let $(X,\mathcal{U})$ be a uniform space \cite{J} and $G$ be a finitely generated group. A map $\Phi:G\times X\rightarrow X$ is said to be a continuous action of $G$ on $X$ if the following conditions hold: 
\begin{enumerate}
\item[(i)] For each $g\in G$, the map $\Phi_g=\Phi(g,.)$ is a homeomorphism. 
\item[(ii)] $\Phi_e(x)=x$ for all $x\in X$, where $e$ is the identity element of the group $G$. 
\item[(iii)] $\Phi_{g_1g_2}(x)=\Phi_{g_1}(\Phi_{g_2}(x))$ for all $x\in X$ and $g_1,g_2\in G$. 
\end{enumerate} 
\medskip

An action $\Phi$ is said to be uniformly continuous if for each $g\in G$, $\Phi_g$ is a uniform equivalence (Both $\Phi_g$ and $\Phi_{g}^{-1}$ are uniformly continuous). We denote the set of all uniformly continuous actions by $Act(G,X)$. If $\Phi\in Act(G,X)$ and $\Psi\in Act(G,Y)$, then the diagonal product $\Phi\times \Psi\in Act(G, X\times Y)$ is defined as $(\Phi\times \Psi)_{g} (x,y) =(\Phi_{g}(x), \Psi_{g}(y))$.   
\medskip

Let $G_{1}=\lbrace g_{i}\mid 1\leq i\leq m\rbrace$ be a finite symmetric generating set for $G$. We can write $G=\cup_{n\geq 0} G_{n}$, where $G_{0}=\lbrace e\rbrace$ and for $n\geq 2$, $g\in G_{n}$ if and only if $g= g_{i_{n}}g_{i_{n-1}}...g_{i_{1}}$ with $g_{i_{j}}\in G_{1}$ for $1\leq j\leq n$. So, $G_n$ contains every element of length at most $n$ with respect to the generating set $G_{1}$. Note that if $G$ is countably infinite, then $G_{k}\setminus G_{k-1}\neq\phi$ for all $k\in \field{N}$ and if $G$ is finite then there exists $k\in \field{N}$ such that $G_{k}\setminus G_{k-1}\neq\phi$ but $G_{n+1}\setminus G_{n}=\phi$ for all $n\geq k$. 
\medskip

Let $d^{G}$ be the word metric on $G$ given by $d^{G}(h,g)=$ inf $\lbrace n \geq 0\mid h^{-1}g\in G_{n}\rbrace$. For $A, B\subset G$, define $d^{G}(A, B) =$ inf $\lbrace d^{G}(a, b)\mid (a,b)\in A\times B\rbrace$. For convenience, we write $d^G(a,b)$ for $d^G(\lbrace a\rbrace, \lbrace b\rbrace)$. The Hausdorff distance between two subsets $A$ and $B$ of $G$ is defined as $d_{G}(A, B) =$ max $\lbrace $sup$_{a\in A}d^{G}(a, B),$ sup$_{b\in B}d^{G}(A, b) \rbrace$. 
\medskip 

The set of all compact subsets of $X$ is denoted by $\mathcal{K}(X)$. A subset $W\subset X$ is said to be $\Phi$-invariant if $\Phi_s(W)\subset W$ for all $s\in G_{1}$. Action $\Phi$ is said to be transitive if for any pair of non-empty open sets $U,V$ in $X$ there exists $g\in G$ such that $\Phi_{g}(U)\cap V\neq \phi$. 
\medskip

Two actions $\Phi\in Act(G,X)$ and $\Psi\in Act(G,Y)$ are said to be uniformly conjugate if there is a uniform equivalence $T:X\rightarrow Y$ such that $T\Phi_{g} = \Psi_{g}T$ for all $g\in G$. A property of an action $\Phi$ preserved under uniform conjugacy is said to be a uniform dynamical property. 

\section{Entropy and Specification for Group Actions}  
Let $n$ be a positive integer, $U\in \mathcal{U}^{s}$ and $\Phi\in Act(G,X)$. We say that a subset $\mathfrak{E}$ of $X$ is $(n,U)$-separated with respect to $\Phi$, if for each pair of distinct points $x, y$ in $\mathfrak{E}$ there exists $g\in G_{n}$ such that $(\Phi_{g}(x), \Phi_{g}(y))\notin U$. A subset $\mathfrak{E}$ of $X$ is $(n, U)$-spanning set for another subset $\mathfrak{F}$ of $X$ with respect to $\Phi$ if for each $x\in \mathfrak{F}$ there exists $y\in \mathfrak{E}$ such that $(\Phi_{g}(x), \Phi_{g}(y))\in U$ for all $g\in G_n$. 
\medskip

For $\mathfrak{K}\in\mathcal{K}(X)$, let $s_{n}(U,\mathfrak{K},\Phi)$ be the maximal cardinality of any $(n, U)$-separated set contained in $\mathfrak{K}$ and $r_{n}(U,\mathfrak{K},\Phi)$ be the minimal cardinality of any $(n, U)$-spanning set for $\mathfrak{K}$. Define $r_{\Phi}(U, \mathfrak{K})= \limsup\limits_{n\rightarrow \infty} \frac{1}{n} log\hspace*{0.1cm} r_{n}(U,\mathfrak{K}, \Phi)$ and $s_{\Phi}(U, \mathfrak{K}) = \limsup\limits_{n\rightarrow \infty} \frac{1}{n}log\hspace*{0.1cm} s_{n}(U,\mathfrak{K}, \Phi)$.
\medskip

\begin{Lemma} For $\Phi\in Act(G,X)$ and $\mathfrak{K}\in \mathcal{K}(X)$, the following statements are true. 
\begin{enumerate}
\item For $U, V\in\mathcal{U}^{s}$ with $V^{2}\subset U$, we have $r_{n}(U,\mathfrak{K}, \Phi)\leq s_{n}(U,\mathfrak{K}, \Phi)\leq r_{n}(V,\mathfrak{K}, \Phi)\leq s_n(V,\mathfrak{K},\Phi)$.
\item For $U\subset V$, we have $r_{\Phi}(U, \mathfrak{K})\geq r_{\Phi}(V, \mathfrak{K})$ and $s_{\Phi}(U, \mathfrak{K})\geq s_{\Phi}(V, \mathfrak{K})$.
\item $\lim_{U\in\mathcal{U}^{s}}(r_{\Phi}(U, \mathfrak{K})) = \lim_{U\in\mathcal{U}^{s}}(s_{\Phi}(U, \mathfrak{K})) = \lim_{U\in\mathcal{U}^{o}}(s_{\Phi}(U, \mathfrak{K})) = \lim_{U\in\mathcal{U}^{o}}(r_{\Phi}(U, \mathfrak{K}))$.  
\end{enumerate}
\label{3.1} 
\end{Lemma}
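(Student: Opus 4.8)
The plan is to ground everything on three elementary comparisons between $(n,U)$-separated and $(n,U)$-spanning sets, and then to obtain the limit identity in part~(3) from parts~(1) and~(2) together with axiom~(5) of a uniformity and the cofinality of $\mathcal{U}^{o}$ in $\mathcal{U}^{s}$.

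For part~(1) I would first handle the two outer inequalities. Fix $W\in\{U,V\}$ and let $\mathfrak{E}\subset\mathfrak{K}$ be a maximal $(n,W)$-separated set; such a set exists by Zorn's lemma (a chain of separated sets has separated union) and is finite, since the diagonal map $x\mapsto(\Phi_{g}(x))_{g\in G_{n}}$ carries $\mathfrak{E}$ injectively onto a separated subset of the compact product $\prod_{g\in G_{n}}\mathfrak{K}$. By maximality, for every $x\in\mathfrak{K}\setminus\mathfrak{E}$ the set $\mathfrak{E}\cup\{x\}$ fails to be $(n,W)$-separated, so there is $y\in\mathfrak{E}$ with $(\Phi_{g}(x),\Phi_{g}(y))\in W$ for all $g\in G_{n}$; hence $\mathfrak{E}$ is $(n,W)$-spanning for $\mathfrak{K}$ and $r_{n}(W,\mathfrak{K},\Phi)\le|\mathfrak{E}|=s_{n}(W,\mathfrak{K},\Phi)$. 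For the middle inequality, take a maximal $(n,U)$-separated set $\mathfrak{E}\subset\mathfrak{K}$ and a minimal $(n,V)$-spanning set $\mathfrak{F}$ for $\mathfrak{K}$, and for each $x\in\mathfrak{E}$ choose $\psi(x)\in\mathfrak{F}$ with $(\Phi_{g}(x),\Phi_{g}(\psi(x)))\in V$ for all $g\in G_{n}$. Then $\psi\colon\mathfrak{E}\to\mathfrak{F}$ is injective: if $\psi(x_{1})=\psi(x_{2})$ then, using that $V$ is symmetric, $(\Phi_{g}(x_{1}),\Phi_{g}(x_{2}))\in V\circ V\subset U$ for all $g\in G_{n}$, which forces $x_{1}=x_{2}$ since $\mathfrak{E}$ is $(n,U)$-separated. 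Therefore $s_{n}(U,\mathfrak{K},\Phi)\le|\mathfrak{F}|=r_{n}(V,\mathfrak{K},\Phi)$.

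Part~(2) is immediate: if $U\subset V$, then every $(n,U)$-spanning set is $(n,V)$-spanning and every $(n,V)$-separated set is $(n,U)$-separated, so $r_{n}(V,\mathfrak{K},\Phi)\le r_{n}(U,\mathfrak{K},\Phi)$ and $s_{n}(V,\mathfrak{K},\Phi)\le s_{n}(U,\mathfrak{K},\Phi)$ for every $n$; applying $\limsup_{n\to\infty}\frac{1}{n}\log(\cdot)$ gives the stated inequalities. For part~(3), part~(2) shows that $U\mapsto r_{\Phi}(U,\mathfrak{K})$ and $U\mapsto s_{\Phi}(U,\mathfrak{K})$ are monotone nets over the directed sets $(\mathcal{U}^{s},\supset)$ and $(\mathcal{U}^{o},\supset)$, so each of the four nets converges in $[0,\infty]$ to its supremum. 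Taking $\limsup_{n\to\infty}\frac{1}{n}\log(\cdot)$ in part~(1) yields $r_{\Phi}(U,\mathfrak{K})\le s_{\Phi}(U,\mathfrak{K})\le r_{\Phi}(V,\mathfrak{K})$ whenever $U,V\in\mathcal{U}^{s}$ with $V\circ V\subset U$; since axiom~(5) furnishes such a symmetric $V$ for every $U\in\mathcal{U}^{s}$, the first chain gives $\sup_{U}r_{\Phi}(U,\mathfrak{K})\le\sup_{U}s_{\Phi}(U,\mathfrak{K})$ and the second gives $\sup_{U}s_{\Phi}(U,\mathfrak{K})\le\sup_{V}r_{\Phi}(V,\mathfrak{K})$, so the two limits over $\mathcal{U}^{s}$ coincide. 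Finally, since $\mathcal{U}^{o}$ is cofinal in $\mathcal{U}^{s}$, restricting each of these convergent nets to $\mathcal{U}^{o}$ produces a subnet with the same limit, which delivers the remaining two equalities.

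I expect the only real friction to lie in the bookkeeping of part~(3): confirming that the directed-set limits genuinely exist (which is exactly what the monotonicity from part~(2) provides) and verifying that the interleaving inequalities from part~(1) together with the cofinality of $\mathcal{U}^{o}$ legitimately collapse all four suprema to a single value. The set-theoretic content of parts~(1) and~(2) is routine once the symmetry of the entourages and the relation $V\circ V\subset U$ are used in the right places.
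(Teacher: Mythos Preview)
Your proof is correct and follows the same strategy as the paper: a maximal separated set spans, an injection from a separated set into a spanning set via $V\circ V\subset U$ gives the middle inequality, monotonicity is immediate, and cofinality of $\mathcal{U}^{o}$ in $\mathcal{U}^{s}$ together with the interleaving from part~(1) collapses the four limits (indeed, your treatment of part~(3) is more explicit than the paper's on why the $r$- and $s$-limits agree). One harmless slip: the diagonal image in your finiteness argument lands in $\prod_{g\in G_{n}}\Phi_{g}(\mathfrak{K})$ rather than $\prod_{g\in G_{n}}\mathfrak{K}$, but each factor is still compact so the conclusion is unaffected.
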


\begin{proof} (1) Let $\mathfrak{K}\in \mathcal{K}(X)$ and $\mathfrak{K'}$ be the maximal $(n,U)$-separated subset of $\mathfrak{K}$. Then, $\mathfrak{K'}$ is $(n,U)$-spanning set for $\mathfrak{K}$. Indeed, if there exists $x\in \mathfrak{K}\setminus\mathfrak{K'}$ such that $(\Phi_{g}(x),\Phi_{g}(y))\notin U$ for all $y\in \mathfrak{K'}$ and some $g\in G_{n}$, then $x$ lies in a separated set for $\mathfrak{K}$ and so in $\mathfrak{K'}$ due to maximality. Therefore, $r_{n}(U,\mathfrak{K},\Phi)\leq s_{n}(U,\mathfrak{K},\Phi)$. Now suppose that $\mathfrak{K''}$ is minimal $(n,V)$-spanning set for $\mathfrak{K}$. 
So for each $x\in \mathfrak{K}$ there exists $f(x)\in \mathfrak{K''}$ such that $(\Phi_{g}(x),\Phi_{g}(f(x)))\in V$ for all $g\in G_{n}$. If $f(x)=f(y)$, then $(\Phi_{g}(x),\Phi_{g}(y))\in V^{2}\subset U$ for all $g\in G_{n}$. Since $\mathfrak{K'}$ is $(n,U)$-separated, $f$ must be injective on $\mathfrak{K'}$. Therefore, $|\mathfrak{K''}| \geq |\mathfrak{K'}|$ and so $r_{n}(V,\mathfrak{K},\Phi)\geq s_{n}(U,\mathfrak{K},\Phi)$. 
\\
(2) Since this follows from the definition, it is left as an easy exercise. 
\\
(3) Since $\Phi$ is uniformly continuous, for $V\in\mathcal{U}^o$ there exists $W\in\mathcal{U}^{o}$ such that $(x,y)\in W$ implies $(\Phi_{g}(x),\Phi_{g}(y))\in V$ for all $g\in G_{n}$. Then, $r_{n}(V,\mathfrak{K},\Phi)$ is at most the number of $W$-neighbourhoods require to cover $\mathfrak{K}$, which is finite because $\mathfrak{K}$ is compact. Therefore, $r_{n}(U,\mathfrak{K},\Phi)$ and $s_{n}(U,\mathfrak{K},\Phi)$ are also finite. Being filter bases for $\mathcal{U}$, $\mathcal{U}^{s}$ and $\mathcal{U}^{o}$ are directed sets, $r_{\Phi}(U, \mathfrak{K})$ and $s_{\Phi}(U, \mathfrak{K})$ are nets in non-negative reals. Further since $\mathcal{U}^{o}$ is cofinal in $\mathcal{U}^{s}$, they give us subnets and so by (2) we get the result.
\end{proof}

\begin{Definition}
For $\Phi\in Act(G, X)$ and $\mathfrak{K}\in \mathcal{K}(X)$, set $h(G_{1},\Phi, \mathfrak{K},\mathcal{U}) = \lim \lbrace r_{\Phi}(U, \mathfrak{K}) \mid U\in\mathcal{U}^{s}\rbrace = \lim \lbrace s_{\Phi}(U, \mathfrak{K}) \mid U\in\mathcal{U}^s\rbrace=\lim \lbrace r_{\Phi}(U, \mathfrak{K}) \mid U\in\mathcal{U}^{o}\rbrace=\lim \lbrace s_{\Phi}(U, \mathfrak{K}) \mid U\in\mathcal{U}^{o}\rbrace$  and $h(G_{1},\Phi, \mathcal{U}) = sup\lbrace h(G_{1},\Phi, \mathfrak{K}, \mathcal{U}) \mid \mathfrak{K} \in \mathcal{K}(X) \rbrace$. The number $h(G_{1},\Phi, \mathcal{U}) $ is called the entropy of $\Phi$ with respect to $\mathcal{U}$ and generator $G_{1}$ of $G$.  
\label{3.2}
\end{Definition}

\begin{Remark}
Let $\Phi\in Act(G,X)$ and $G_{1}$ be the generator of $G$. Then for each $s\in G_{1}$, the entropy of $\Phi_{s}$ is less than or equal to the entropy of $\Phi$ with respect to the generator $G_{1}$.   
\label{3.3}
\end{Remark}

\begin{theorem}
Let $\Phi\in Act(G,X)$. If $\mathcal{U}$ and $\mathcal{V}$ are uniformly equivalent uniformities, then $h(G_{1},\Phi, \mathcal{U}) = h(G_{1},\Phi, \mathcal{V})$. 
\label{3.4}
\end{theorem}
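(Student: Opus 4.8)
The plan is to combine the monotonicity of the separated-set quantities recorded in Lemma~\ref{3.1}(2) with the defining feature of uniform equivalence, namely that each of the two uniformities refines the other. First I would set up the translation dictionary. Since $I_1\colon(X,\mathcal{U})\to(X,\mathcal{V})$ is uniformly continuous, for every $V\in\mathcal{V}^{s}$ there is $U\in\mathcal{U}$ with $U\subset V$; replacing $U$ by the symmetric entourage $U\cap U^{-1}$, which is still contained in $V$ because $V$ is symmetric, we may take $U\in\mathcal{U}^{s}$. Symmetrically, uniform continuity of $I_2$ gives, for every $U\in\mathcal{U}^{s}$, some $V\in\mathcal{V}^{s}$ with $V\subset U$. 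I would also note that uniformly equivalent uniformities are in particular continuously equivalent, so they induce the same topology on $X$, and hence the family $\mathcal{K}(X)$ over which the supremum in Definition~\ref{3.2} is taken is the same whether computed from $\mathcal{U}$ or from $\mathcal{V}$.

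Now fix $\mathfrak{K}\in\mathcal{K}(X)$ and $V\in\mathcal{V}^{s}$, and choose $U\in\mathcal{U}^{s}$ with $U\subset V$ as above. Lemma~\ref{3.1}(2) gives $s_{\Phi}(V,\mathfrak{K})\le s_{\Phi}(U,\mathfrak{K})$, and because the net $\{\,s_{\Phi}(W,\mathfrak{K})\mid W\in\mathcal{U}^{s}\,\}$ is monotone along $(\mathcal{U}^{s},\supset)$ its limit equals its supremum; hence $s_{\Phi}(V,\mathfrak{K})\le h(G_{1},\Phi,\mathfrak{K},\mathcal{U})$. Since $V\in\mathcal{V}^{s}$ was arbitrary, passing to the limit over $\mathcal{V}^{s}$ yields $h(G_{1},\Phi,\mathfrak{K},\mathcal{V})\le h(G_{1},\Phi,\mathfrak{K},\mathcal{U})$. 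Interchanging the roles of $\mathcal{U}$ and $\mathcal{V}$, now using $I_2$, gives the opposite inequality, so $h(G_{1},\Phi,\mathfrak{K},\mathcal{U})=h(G_{1},\Phi,\mathfrak{K},\mathcal{V})$ for every $\mathfrak{K}\in\mathcal{K}(X)$. Taking the supremum over $\mathcal{K}(X)$, a family common to both uniformities, gives $h(G_{1},\Phi,\mathcal{U})=h(G_{1},\Phi,\mathcal{V})$.

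There is no serious obstacle; the whole argument is a soft comparison of monotone nets indexed by the two uniformities. The step I would watch most carefully is the symmetrization bookkeeping: Lemma~\ref{3.1} is phrased for $\mathcal{U}^{s}$, so when uniform continuity of $I_1$ hands us a possibly non-symmetric $U\in\mathcal{U}$ refining $V$, we must pass to a symmetric entourage without destroying the inclusion $U\subset V$, which is precisely why $V$ is taken from $\mathcal{V}^{s}$ at the outset. One could equally run the argument with the spanning quantities $r_{\Phi}$, or with $\mathcal{U}^{o}$ in place of $\mathcal{U}^{s}$, since Lemma~\ref{3.1}(3) identifies all four prescriptions. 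Finally, it is worth observing that on a fixed underlying set uniform equivalence in fact forces $\mathcal{U}=\mathcal{V}$ as collections of subsets of $X\times X$, by the upward closure axiom, so the theorem is a formality once that is noticed; the net-comparison proof above is the version that transfers verbatim to the statement that $h(G_{1},\Phi,\cdot)$ is a uniform dynamical property.
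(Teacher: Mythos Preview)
Your proof is correct and follows essentially the same route as the paper: interleave symmetric entourages from the two uniformities via uniform continuity of the identity maps and invoke the monotonicity of Lemma~\ref{3.1}(2); the only cosmetic differences are that the paper works with the spanning quantities $r_\Phi$ and compresses the two one-sided inequalities into a single sandwich $r_\Phi(U,\mathfrak{K})\le r_\Phi(V,\mathfrak{K})\le r_\Phi(W,\mathfrak{K})$ with $U,W\in\mathcal{U}^{s}$ and $V\in\mathcal{V}^{s}$. Your closing observation that the upward-closure axiom forces $\mathcal{U}=\mathcal{V}$ outright is a point the paper does not make and indeed renders the statement a formality.
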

\begin{proof}
Let $U\in\mathcal{U}^{s}$, then there exists $V\in\mathcal{V}^{s}$ such that whenever $(x,y)\in V$, we have $(x,y)\in U$.  Choose $W\in\mathcal{U}^{s}$ such that whenever $(x,y)\in W$, we have $(x,y)\in V$. If $\mathfrak{K}\in \mathcal{K}(X)$, then $r_{\Phi}(U,\mathfrak{K}) \leq r_{\Phi} (V,\mathfrak{K}) \leq r_{\Phi}(W,\mathfrak{K})$. Then by the definition of entropy, we can conclude that $h(G_{1},\Phi, \mathcal{U}) = h(G_{1},\Phi, \mathcal{V})$. 
\end{proof}

\begin{theorem}
If $\Phi\in Act(G,X)$ and $\Psi\in Act(G, Y)$ are uniformly conjugate, then $h(G_{1},\Phi, \mathcal{U}) = h(G_{1},\Psi, \mathcal{V}) $. 
\label{3.5}
\end{theorem}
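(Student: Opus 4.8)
The plan is to use the uniform equivalence $T\colon X\to Y$ implementing the conjugacy ($T\Phi_{g}=\Psi_{g}T$ for all $g\in G$) to transport spanning sets with a controlled change of entourage, in the spirit of the proof of Theorem \ref{3.4} but now moving between the two spaces. First I would fix $V\in\mathcal{V}^{s}$ and invoke the uniform continuity of $T$ to obtain $U\in\mathcal{U}^{s}$ such that $(x,y)\in U$ implies $(T(x),T(y))\in V$ (replacing $U$ by $U\cap U^{-1}$ to keep it symmetric). Given $\mathfrak{K}\in\mathcal{K}(X)$ and any $(n,U)$-spanning set $\mathfrak{E}\subset X$ for $\mathfrak{K}$ with respect to $\Phi$, I claim $T(\mathfrak{E})$ is an $(n,V)$-spanning set for $T(\mathfrak{K})$ with respect to $\Psi$: for $z=T(x)\in T(\mathfrak{K})$ pick $y\in\mathfrak{E}$ with $(\Phi_{g}(x),\Phi_{g}(y))\in U$ for all $g\in G_{n}$; applying $T$ and using $\Psi_{g}T=T\Phi_{g}$ gives $(\Psi_{g}(z),\Psi_{g}(T(y)))\in V$ for all $g\in G_{n}$, with $T(y)\in T(\mathfrak{E})$. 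Since $T$ is injective, $|T(\mathfrak{E})|\leq|\mathfrak{E}|$, so taking the infimum over $\mathfrak{E}$ yields $r_{n}(V,T(\mathfrak{K}),\Psi)\leq r_{n}(U,\mathfrak{K},\Phi)$ (both finite by Lemma \ref{3.1}(3)), and hence $r_{\Psi}(V,T(\mathfrak{K}))\leq r_{\Phi}(U,\mathfrak{K})$.

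Next I would pass to entropies. Since $T$ is a homeomorphism, $T(\mathfrak{K})\in\mathcal{K}(Y)$ and $\mathfrak{K}\mapsto T(\mathfrak{K})$ is a bijection $\mathcal{K}(X)\to\mathcal{K}(Y)$ with inverse $\mathfrak{L}\mapsto T^{-1}(\mathfrak{L})$. By the previous paragraph, for each $V\in\mathcal{V}^{s}$ there is $U\in\mathcal{U}^{s}$ with $r_{\Psi}(V,T(\mathfrak{K}))\leq r_{\Phi}(U,\mathfrak{K})\leq h(G_{1},\Phi,\mathfrak{K},\mathcal{U})$; since the net $\{r_{\Psi}(V,T(\mathfrak{K}))\mid V\in\mathcal{V}^{s}\}$ is monotone (Lemma \ref{3.1}(2)) with limit $h(G_{1},\Psi,T(\mathfrak{K}),\mathcal{V})$, this forces $h(G_{1},\Psi,T(\mathfrak{K}),\mathcal{V})\leq h(G_{1},\Phi,\mathfrak{K},\mathcal{U})$. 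Taking the supremum over $\mathfrak{K}\in\mathcal{K}(X)$ and using the bijection above gives $h(G_{1},\Psi,\mathcal{V})\leq h(G_{1},\Phi,\mathcal{U})$.

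Finally, running the identical argument with the conjugacy $T^{-1}$ between $\Psi$ and $\Phi$ yields the reverse inequality, whence $h(G_{1},\Phi,\mathcal{U})=h(G_{1},\Psi,\mathcal{V})$. The substantive points are all bookkeeping: keeping straight which of $T$ and $T^{-1}$ supplies the entourage at each stage so that the inequalities point in the right direction, arranging symmetry of the chosen entourages, and verifying that the pointwise estimates $r_{\Psi}(V,T(\mathfrak{K}))\le h(G_{1},\Phi,\mathfrak{K},\mathcal{U})$ legitimately pass to the limit over the net $\mathcal{V}^{s}$ — each of which is handled by Lemma \ref{3.1} together with the fact that all quantities involved are monotone nets valued in $[0,\infty]$. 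I do not anticipate any genuine obstacle beyond this bookkeeping.
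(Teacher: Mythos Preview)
Your proposal is correct and follows essentially the same route as the paper's proof: transport an $(n,U)$-spanning set for $\mathfrak{K}$ through the conjugacy $T$ to an $(n,V)$-spanning set for $T(\mathfrak{K})$, obtain $r_{n}(V,T(\mathfrak{K}),\Psi)\leq r_{n}(U,\mathfrak{K},\Phi)$, pass to the limit and to the supremum over compacta via the bijection $\mathcal{K}(X)\to\mathcal{K}(Y)$, and then reverse roles using $T^{-1}$. Your write-up is in fact a bit more careful than the paper's (explicitly symmetrizing entourages and invoking Lemma \ref{3.1} for the monotone-net passage), but the substance is the same.
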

\begin{proof}
Let $f:(X,\mathcal{U})\rightarrow (Y,\mathcal{V})$ be a uniform conjugacy. By uniform continuity of $f$, for every $V\in\mathcal{V}$ there exists $U\in\mathcal{U}$ such that if $(x,y)\in U$ then $(f(x),f(y))\in V$. Let $n\in \field{N}$ and $\mathfrak{A}$ is $(n, U)$-spanning set for compact set $\mathfrak{K}$ with respect to $\Phi$. 
Then $f(\mathfrak{K})$ is compact and $f(\mathfrak{A})$ is $(n, V)$-spanning for $f(\mathfrak{K})$ with respect to $\Psi$. 
Since $|f(\mathfrak{A})|=|\mathfrak{A}|$, we have $r_{n}(V,f(\mathfrak{K}),\Psi)\leq r_{n}(U,\mathfrak{K},\Phi) $.
Therefore, $h(G_{1},\Psi, f(\mathfrak{K}),\mathcal{V})\leq h(G_{1},\Phi, \mathfrak{K},\mathcal{U})$.
Since there is one to one correspondence between compact subsets of $X$ and $Y$, we have  
$h(G_{1},\Psi, \mathcal{V})=$ $sup \lbrace h(G_{1},\Psi, \mathfrak{K},\mathcal{V}) :\mathfrak{K}\in \mathcal{K}(Y)\rbrace$ $=$ $sup \lbrace h(G_{1},\Psi, f(\mathfrak{K}), \mathcal{V}) :\mathfrak{K}\in \mathcal{K}(X)\rbrace \leq$ $sup \lbrace h(G_{1},\Phi,$ $ \mathfrak{K},\mathcal{U}) :\mathfrak{K}\in \mathfrak{K}(X)\rbrace$ $=h(G_{1},\Phi, \mathcal{U})$. Thus, $h(G_{1},\Psi, \mathcal{V})\leq h(G_{1},\Phi, \mathcal{U})$. 
Similarly, one can show that $h(G_{1},\Phi, \mathcal{U}) \leq h(G_{1},\Psi, \mathcal{V})$.
\end{proof}

\begin{Example}
Let $X = \mathbb{R}$ be with natural uniformity generated by the euclidean metric and $\Phi\in Act(G, X)$ an equicontinuous action. Then for every $\epsilon >0$ there exists $\delta_{\epsilon} >0$ such that $d(x,y)< \delta_{\epsilon}$ implies $ d(\Phi_{g}(x), \Phi_{g}(y))<\epsilon$ for all $g\in G$. Let $\mathfrak{K}\in \mathcal{K}(X)$ and $\mathfrak{A}$ be the maximal $(n,\epsilon)$-separated subset of $\mathfrak{K}$. Therefore by equicontinuity, if $x,y\in \mathfrak{A}$ then $d(x,y)> \delta_{\epsilon}$.  Thus, $s_{n}(\mathfrak{K},\epsilon ) \leq diam(\mathfrak{K})/\delta_{\epsilon}$ and so $s_{\Phi}(\epsilon,\mathfrak{K})=0$. Since $\epsilon$ was chosen arbitrary, we conclude that entropy of any equicontinuous action on $X$ is zero. 
\end{Example}

\begin{Definition}
Let $(X,\mathcal{U})$ be a uniform space and $\Phi\in Act(G, X)$. Then $z$ is said to be a specification point of $\Phi$
if for every $U\in\mathcal{U}^s$, there exists an integer $c(U_{z}) > 0$ such that for any $k\in\field{N}$, any finite family $(\Lambda_i)_{i=1}^{k}$ of subsets of $G$ with $d_{G}(\Lambda_i, \Lambda_j) > c(U_{z})$ for $i\neq j$ and any collection of points $(x_i)_{i= 1}^{k}$ with $x_{1} = z$, there exists a tracing point $x\in X$ such that $(\Phi_{g_{i}}(x), \Phi_{g_{i}}(x_{i}))\in U$ for all $g_i\in \Lambda_i$, $1\leq i\leq k$. If $x$ is a periodic point then we say that $z$ is a periodic specification point, where a point $x$ is said to be periodic if its orbit under $\Phi$ is finite. We say that $\Phi$ has specification if each point $z\in X$ is a specification point with a common specification integer $c(U_z)$ and that $\Phi$ has periodic specification if each point $z\in X$ is a periodic specification point with a common specification integer $c(U_z)$. 
\label{4.1}
\end{Definition}

\begin{theorem}
Specification and specification point does not depend on the choice of generator.
\end{theorem}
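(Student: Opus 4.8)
The plan is to reduce the statement to the standard fact that any two finite symmetric generating sets of $G$ induce bi-Lipschitz equivalent word metrics, and then to observe that the tracing condition in Definition \ref{4.1} never mentions the generating set. So let $G_1$ and $G_1'$ be two finite symmetric generating sets of $G$, with associated filtrations $G=\bigcup_n G_n=\bigcup_n G_n'$, word metrics $d^{G},d'^{G}$, and Hausdorff distances $d_G,d_G'$. Since each element of $G_1$ is a product of a bounded number of elements of $G_1'$, and vice versa, there is an integer $L\geq 1$ with $\frac{1}{L}\,d^{G}(h,g)\leq d'^{G}(h,g)\leq L\,d^{G}(h,g)$ for all $h,g\in G$ (here one uses left-invariance of word metrics). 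Passing to infima over points and then to suprema and maxima over subsets gives $\frac{1}{L}\,d_G(A,B)\leq d_G'(A,B)\leq L\,d_G(A,B)$ for all $A,B\subset G$.

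Next I would fix $z\in X$ which is a specification point of $\Phi$ with respect to $G_1$ and show it is one with respect to $G_1'$. Given $U\in\mathcal{U}^{s}$, let $c=c(U_z)>0$ be the $G_1$-specification integer supplied by Definition \ref{4.1}, and set $c'=\lfloor Lc\rfloor+1$, a positive integer. If $(\Lambda_i)_{i=1}^{k}$ are subsets of $G$ with $d_G'(\Lambda_i,\Lambda_j)>c'$ for $i\neq j$, then $d_G(\Lambda_i,\Lambda_j)\geq \frac{1}{L}\,d_G'(\Lambda_i,\Lambda_j)>\frac{c'}{L}>c$, so for any points $(x_i)_{i=1}^{k}$ with $x_1=z$ the $G_1$-specification property yields a tracing point $x\in X$ with $(\Phi_{g_i}(x),\Phi_{g_i}(x_i))\in U$ for all $g_i\in\Lambda_i$, $1\leq i\leq k$. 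This $x$ is exactly what the $G_1'$-version demands, since that condition is phrased purely in terms of the group elements $g_i$ and the action $\Phi$. Hence $z$ is a specification point with respect to $G_1'$, with witness the positive-integer-valued assignment $U\mapsto c'$; interchanging the roles of $G_1$ and $G_1'$ gives the converse, so being a specification point is generator-independent.

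For the property that $\Phi$ has specification: if, for each $U$, there is a single integer $c(U_z)$ working simultaneously for all $z\in X$ relative to $G_1$, then the construction above produces the single integer $c'=\lfloor Lc\rfloor+1$ working simultaneously for all $z$ relative to $G_1'$, so $\Phi$ has specification with respect to $G_1'$ as well. The same reasoning applies to periodic specification and periodic specification points, because finiteness of the orbit of $x$ under $\Phi$ is a property of $\Phi$ alone and does not refer to any generating set.

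The only place demanding care is the passage from the Lipschitz comparison of word metrics to the Lipschitz comparison of Hausdorff distances, together with the minor bookkeeping ensuring the new constant is a genuine positive integer. I expect no essential obstacle: once the comparison constant $L$ is in hand, the generator-freeness of the tracing condition makes everything automatic, and the finite-group case (where the comparison inequalities hold trivially and the hypotheses on the $(\Lambda_i)$ may become vacuous for $k\geq 2$) is covered without change.
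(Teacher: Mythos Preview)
Your proposal is correct and follows essentially the same approach as the paper: both arguments rest on the bi-Lipschitz equivalence of word metrics coming from two finite symmetric generating sets, and then observe that the tracing condition in Definition~\ref{4.1} is stated purely in terms of $\Phi$ and the group elements $g_i\in\Lambda_i$, hence is generator-free. Your write-up is in fact more complete than the paper's, since you spell out the passage from word-metric to Hausdorff-distance comparison and explicitly cover the periodic and uniform (common-$c$) variants.
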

\begin{proof}
Let $G_{1}=\lbrace g_{i} : 1\leq i\leq p \rbrace$ and $H_{1}=\lbrace h_{i} : 1\leq i \leq q \rbrace$ be two generators of $G$. Then choose $N\in \field{N}$ such that every element of $G_{1}$ can be written in terms of elements of $H_{1}$ with length at most $N$. Suppose that $\Phi$ has specification with respect to the generator $G_{1}$. To show that $\Phi$ has specification with respect to the generator $H_1$. Let $U\in \mathcal{U}^{s}$ and $c(U)$ be a specification integer for $\Phi$ with respect to $G_1$. Let $(\Lambda_{i})_{i=1}^{k}$ be a finite family of subsets of $G$ with $d_{(G, H_{1})}(\Lambda_{i}, \Lambda_{j}) > Nc(U)$ for $i\neq j$ and $(x_i)_{i= 1}^{k}$ be points in $X$. Then $d_{(G, G_{1})}(\Lambda_{i}, \Lambda_{j})> c(U)$ for $i\neq j$ and hence, there exists $x\in X$ such that $(\Phi_{g_{i}}(x), \Phi_{g_{i}}(x_{i}))\in U$ for all $g_i\in \Lambda_{i}$ and $1\leq i\leq k$. Thus, $\Phi$ has specification with respect to the generator $H_{1}$. Similarly, one can prove that $z$ is a specification point for $\Phi$ with respect to $G_1$ implies that it is a specification point for $\Phi$ with respect to $H_1$.     
\end{proof} 

Recall that, a continuous map $h : X\rightarrow X$ is said to have specification if for every $U\in \mathcal{U}^{s}$ there exists an integer $p(U)\geq 1$ such that for each $k \geq 1$, any points $x_1, . . . , x_k$, and any sequence of positive integers $n_1, . . . ,n_k$ and $p_1, . . . , p_k$ with $p_i \geq p(U)$ there exists a point $x$ in $X$ such that $(h^{j}(x), h^{j}(x_{1}))\in U$ for all $0\leq j\leq n_{1}$ and $(h^{j+n_{1}+p_{1}+... + n_{i-1}+p_{i-1}(x)}, h^{j}(x_{1}))\in U$ for every $0\leq j\leq n_{i}$ and  $2\leq i\leq k$. 

\begin{theorem}
If $\Phi\in Act(G, X)$ has specification, then for each infinite order element $g\in G$, $\Phi_{g}$ has specification.   
\label{4.2}
\end{theorem}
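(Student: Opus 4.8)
The plan is to fix an infinite order element $g\in G$ and to transport a ``specification configuration'' for the map $h=\Phi_g$ to one for the group action $\Phi$, with the cyclic subgroup $\langle g\rangle$ playing the role of the time axis. The preliminary observation I would record is purely combinatorial: since $G_1$ is a finite generating set, each ball $G_c$ in the word metric is finite, and since $g$ has infinite order the powers $g^0,g^1,g^2,\dots$ are pairwise distinct; hence for every $c\ge 0$ the set $\{\,k\ge 0 : d^G(e,g^k)\le c\,\}$ is finite, so there is $N(c)\in\field N$ with $d^G(e,g^k)>c$ whenever $k\ge N(c)$.

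Now fix $U\in\mathcal U^s$, let $c(U)$ be the common specification integer of $\Phi$, and set $p(U)=N(c(U))$. Given $k\ge 1$, points $x_1,\dots,x_k\in X$, and positive integers $n_1,\dots,n_k$, $p_1,\dots,p_k$ with every $p_i\ge p(U)$, put $m_1=0$, $m_i=n_1+p_1+\dots+n_{i-1}+p_{i-1}$ for $2\le i\le k$, and $\Lambda_i=\{\,g^{m_i+j} : 0\le j\le n_i\,\}\subset G$. For $j>i$ one has $m_j-(m_i+n_i)=p_i+n_{i+1}+p_{i+1}+\dots+n_{j-1}+p_{j-1}\ge p(U)$, so for $g^a\in\Lambda_i$ and $g^b\in\Lambda_j$ the exponent satisfies $b-a\ge p(U)$ and therefore $d^G(g^a,g^b)=d^G(e,g^{b-a})>c(U)$. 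Since every such pairwise word distance exceeds $c(U)$, the (finite) infima and suprema in the definition of the Hausdorff distance also exceed $c(U)$, giving $d_G(\Lambda_i,\Lambda_j)>c(U)$ for all $i\neq j$.

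Next I would apply the specification of $\Phi$ to the separated family $(\Lambda_i)_{i=1}^k$ together with the translated points $y_i=\Phi_{g^{-m_i}}(x_i)$ (note $y_1=x_1$): since every point of $X$ is a specification point with the common integer $c(U)$, there is a tracing point $x\in X$ with $(\Phi_h(x),\Phi_h(y_i))\in U$ for all $h\in\Lambda_i$, $1\le i\le k$. Using $\Phi_{g^a}=\Phi_g^{\,a}$ and $\Phi_{g^{m_i+j}}(y_i)=\Phi_{g^{m_i+j}}\Phi_{g^{-m_i}}(x_i)=\Phi_g^{\,j}(x_i)$, this reads $(\Phi_g^{\,j}(x),\Phi_g^{\,j}(x_1))\in U$ for $0\le j\le n_1$ and $(\Phi_g^{\,n_1+p_1+\dots+n_{i-1}+p_{i-1}+j}(x),\Phi_g^{\,j}(x_i))\in U$ for $0\le j\le n_i$ and $2\le i\le k$, which is precisely the statement that $\Phi_g$ has specification with constant $p(U)$.

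I expect the only genuine difficulty to be the first step: one must rule out that the word length $d^G(e,g^k)$ stays bounded along a subsequence, for otherwise a finite gap $p(U)$ in the ``$\langle g\rangle$-time'' would not force Hausdorff separation exceeding $c(U)$ in $G$. Both infinite order of $g$ and local finiteness of the Cayley graph (equivalently, finiteness of $G_1$) are needed here, and neither alone suffices. Once that estimate is in place, the remainder is bookkeeping with the identities $\Phi_{g^a}=\Phi_g^{\,a}$ and the translates $y_i=\Phi_{g^{-m_i}}(x_i)$.
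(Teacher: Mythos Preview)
Your proof is correct and follows the same architecture as the paper's: translate the map--specification data for $\Phi_g$ into the group--action framework by taking the $\Lambda_i$'s to be consecutive blocks of powers of $g$ and replacing the points $x_i$ by the translates $y_i=\Phi_{g^{-m_i}}(x_i)$, then invoke the specification of $\Phi$.

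The one substantive difference is precisely the step you single out as the ``only genuine difficulty.'' The paper enlarges the generating set so that $g\in G_1$ and then declares the specification constant for $\Phi_g$ to be $c(U)+1$, asserting that $d_G(\Lambda_i,\Lambda_j)>c(U)$ ``as $g$ has infinite order.'' This tacitly assumes $d^G(e,g^k)\ge k$ once $g$ is a generator, which is false in general: for instance, in the integer Heisenberg group with $z=[x,y]$ included in $G_1$, one has $z^{n^2}=[x^n,y^n]$ and hence $d^G(e,z^{n^2})\le 4n\ll n^2$, so an exponent gap of $c(U)+1$ need not produce word--metric separation exceeding $c(U)$. Your argument avoids this by using local finiteness of the Cayley graph: each ball $G_c$ is finite, so only finitely many powers of $g$ lie in it, and choosing $p(U)=N(c(U))$ (possibly much larger than $c(U)+1$) forces every pairwise word distance between the $\Lambda_i$'s to exceed $c(U)$. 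This is the correct justification, and on this point your write--up is more rigorous than the paper's.
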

\begin{proof}
Let $U\in \mathcal{U}^{s}$ and $c(U)$ is a specification integer for $\Phi$. Choose $k\in \field{N}$, points $\lbrace x_{1},. . . , x_{k}\rbrace$ in $X$, positive integers $n_{1}, . . .,n _{k}$ and $p_{1},. . . ,p_{k}$ such that $p_{i}\geq c(U)+1$ for every $p_{i}$ with $n_{0}= p_{0}=0$. For an infinite order element $g\in G$, we construct a finite generating set $G_{1}$ containing $g$. Clearly, $\Lambda_{i}=\lbrace g^{j} : \sum_{m=0}^{i-1}(p_{m}+n_{m})\leq j \leq n_{i}+ \sum_{m=0}^{i-1}(p_{m}+n_{m})\rbrace$ is finite for each $i=1,...,k$. In fact, $d_{G}(\Lambda_{i},\Lambda_{j})> c(U)$ for $i\neq j$, as $g$ has infinite order. Let $x_{j}^{*} = g^{-\sum_{m=0}^{j-1}}(x_{j})$ for $1\leq j\leq k$. By the specification of $\Phi$ there exists $x\in X$ such that $(\Phi_{g_{i}}(x), \Phi_{g_{i}}(x_{i}^{*}))\in U$ for all $1\leq i \leq k$ and $g_{i}\in \Lambda_{i}$, which is same as saying that $((\Phi_{g})^{j}(x), (\Phi_{g})^{j}(x_{1}))\in U$ for all $0\leq j\leq n_{1}$ and $((\Phi_{g})^{j+n_{1}+p_{1}+... + n_{i-1}+p_{i-1}(x)}, (\Phi_{g})^{j}(x_{1}))\in U$ for all $0\leq j\leq n_{i}$ and $2\leq i\leq k$. Therefore, $\Phi_{g}$ has specification.  
\end{proof}

\begin{theorem}
Two actions $\Phi\in Act(G,(X,\mathcal{U}))$ and $\Psi\in Act(G,(Y,\mathcal{V}))$ have specification (periodic specification) if and only if $\Phi\times \Psi$ the diagonal action has specification (periodic specification). 
\label{4.4}
\end{theorem}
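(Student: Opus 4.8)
The plan is to pass back and forth between the two coordinates using the structure of the product uniformity $\mathcal{U}\times\mathcal{V}$ on $X\times Y$, whose symmetric entourages have a base consisting of the rectangles
\[
U\ast V:=\lbrace((x,y),(x',y'))\in(X\times Y)^{2}\mid (x,x')\in U,\ (y,y')\in V\rbrace,\qquad U\in\mathcal{U}^{s},\ V\in\mathcal{V}^{s},
\]
and to exploit that $X\times X\in\mathcal{U}$ and $Y\times Y\in\mathcal{V}$ are themselves (symmetric) entourages, so that $U\ast(Y\times Y)$ and $(X\times X)\ast V$ are legitimate entourages of the product. First I would fix one finite symmetric generating set $G_{1}$ of $G$ and use it for all three actions; this is legitimate since, as shown above, specification does not depend on the generator, and in any case $d_{G}$ only involves $G$ and $G_{1}$. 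I would also record the elementary observation that $\lbrace(\Phi\times\Psi)_{g}(x,y)\mid g\in G\rbrace$ sits inside $\lbrace\Phi_{g}(x)\mid g\in G\rbrace\times\lbrace\Psi_{g}(y)\mid g\in G\rbrace$ and projects onto each of the two factors; consequently $(x,y)$ is periodic for $\Phi\times\Psi$ if and only if $x$ is periodic for $\Phi$ and $y$ is periodic for $\Psi$, which is what allows the parenthetical ``periodic'' version to be treated in parallel with the plain one throughout.

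For the forward implication, I would show that an arbitrary $(a,b)\in X\times Y$ is a specification point of $\Phi\times\Psi$. Given $W\in(\mathcal{U}\times\mathcal{V})^{s}$, pick $U\in\mathcal{U}^{s}$ and $V\in\mathcal{V}^{s}$ with $U\ast V\subseteq W$ and set $c(W)=\max\lbrace c_{\Phi}(U),c_{\Psi}(V)\rbrace$, which depends only on $W$. Then, given a finite family $(\Lambda_{i})_{i=1}^{k}$ of subsets of $G$ with $d_{G}(\Lambda_{i},\Lambda_{j})>c(W)$ for $i\neq j$ and points $(x_{i},y_{i})_{i=1}^{k}$ with $(x_{1},y_{1})=(a,b)$, I would apply the specification of $\Phi$ to $(\Lambda_{i})$ and $(x_{i})$ to obtain a tracing point $x$, apply the specification of $\Psi$ to $(\Lambda_{i})$ and $(y_{i})$ to obtain a tracing point $y$, and verify that $(x,y)$ traces the given configuration inside $U\ast V\subseteq W$. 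In the periodic case I would take $x$ and $y$ to be periodic points, whence $(x,y)$ is periodic by the orbit observation above.

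For the reverse implication I would prove that $\Phi$ has specification (the argument for $\Psi$ being symmetric). Fix $a\in X$. Given $U\in\mathcal{U}^{s}$, invoke the specification of $\Phi\times\Psi$ for the entourage $W=U\ast(Y\times Y)$ and put $c_{\Phi}(U):=c(W)$. Given $(\Lambda_{i})_{i=1}^{k}$ with $d_{G}(\Lambda_{i},\Lambda_{j})>c_{\Phi}(U)$ for $i\neq j$ and points $(x_{i})_{i=1}^{k}$ with $x_{1}=a$, choose any $b\in Y$ (possible since $Y\neq\emptyset$), run the specification of $\Phi\times\Psi$ on $(\Lambda_{i})$ and the points $(x_{i},b)_{i=1}^{k}$, and read off the first coordinate of the resulting tracing point $(x,y)$: since membership in $W$ imposes no restriction on the $Y$-coordinate, one gets $(\Phi_{g}(x),\Phi_{g}(x_{i}))\in U$ for all $g\in\Lambda_{i}$, so $x$ traces $(x_{i})$ for $\Phi$. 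In the periodic case, a periodic tracing point $(x,y)$ for $\Phi\times\Psi$ has periodic first coordinate by the orbit observation.

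There is no deep obstacle here; the proof is essentially bookkeeping with the product uniformity, and the two things that must be handled carefully are (a) that the rectangles $U\ast V$ genuinely form a base of $\mathcal{U}\times\mathcal{V}$ while $Y\times Y$ (respectively $X\times X$) is itself an entourage --- this is exactly what decouples the two coordinates in the two directions --- and (b) that every integer we manufacture, namely $c(W)$ in the forward direction and $c_{\Phi}(U)$ in the reverse one, depends only on the displayed entourage and on nothing else, so that the ``common specification integer'' clause of Definition~\ref{4.1} is respected. The periodic variant adds nothing beyond the orbit-projection remark.
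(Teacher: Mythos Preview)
Your proposal is correct and follows essentially the same coordinate-by-coordinate bookkeeping as the paper's proof. The only differences are cosmetic: in the forward direction you pass to a basic rectangle $U\ast V\subseteq W$ whereas the paper works with the projections of $W$, and in the reverse direction you use the trivial entourage $Y\times Y$ in the second slot rather than an arbitrary $V\in\mathcal{V}^{s}$; both choices are slight streamlinings of the same argument.
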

\begin{proof}
Let us denote the diagonal product of $\Phi$ and $\Psi$ by $\varphi$, product uniformity $\mathcal{W}= \mathcal{U}\times \mathcal{V}$. Let $W\in \mathcal{W}^{s}$. Set $U= \lbrace (x_{1},x_{2})\in X\times X : (x_{1},y_{1},x_{2},y_{2})\in W$ for some $y_{1},y_{2}\in Y \rbrace$ and $V= \lbrace (y_{1},y_{2})\in Y\times Y : (x_{1},y_{1},x_{2},y_{2})\in W$ for some $x_{1},x_{2}\in X \rbrace$. Then $U\in \mathcal{U}^{s}$ and $V\in \mathcal{V}^{s}$. Let $c(U)$ and $c(V)$ be specification integers for $\Phi$ and $\Psi$ respectively. Set $c(UV) = max\lbrace c(U), c(V)\rbrace$. Let $\lbrace \Lambda_{i} \rbrace_{i=1}^{k}$ be a finite family of subsets of $G$ with $d_{G}(\Lambda_{i}, \Lambda_{j})> c(UV)$ for $i\neq j$, and $\lbrace (x_{i},y_{i})\rbrace_{i=1}^{k}$ be points in $X\times Y$. By specification of $\Phi$ and $\Psi$, we can choose $x\in X$ and $y\in Y$ such that $(\Phi_{g_{i}}(x), \Phi_{g_{i}}(x_{i}))\in U$ and $(\Psi_{g_{i}}(y), \Psi_{g_{i}}(y_{i}))\in V$ for all $g_{i}\in \Lambda_{i}$ and $1\leq i\leq k$. Thus we have $(\varphi_{g_{i}}(x,y), \varphi_{g_{i}}(x_{i},y_{i}))\in W$ for all $g_{i}\in \Lambda_{i}$ and $1\leq i\leq k$. Therefore, we conclude that $\varphi$ has specification. Moreover, if the points $x$ and $y$ are periodic points for $\Phi$ and $\Psi$, then $(x,y)$ is also a periodic point for $\varphi$. Thus, if $\Phi$ and $\Psi$ have periodic specification then $\varphi$ has periodic specification.   
\medskip

Conversely, suppose that $\varphi$ has specification. Let $U\in \mathcal{U}^{s}$ and $V\in \mathcal{V}^{s}$ then 
$W=\lbrace (x,y,x',y')\in (X\times Y)\times (X\times Y) : (x,x')\in U, (y,y')\in V\rbrace \in \mathcal{W}^{s}$. Choose a specification integer $c(W)$ for $\varphi$. Let $\lbrace \Lambda_{i} \rbrace_{i=1}^{k}$ be a finite family of subsets of $G$ with $d_{G}(\Lambda_{i}, \Lambda_{j})> c(W)$ for $i\neq j$, $\lbrace x_{i}\rbrace_{i=1}^{k}$, $\lbrace y_{i}\rbrace_{i=1}^{k}$ be points in $X$ and $Y$ respectively. By specification of $\varphi$ there exists $(x,y')\in X\times Y$ such that $(\varphi_{g_{i}}(x,y'), \varphi_{g_{i}}(x_{i},y_{i}))\in W$ for all $g_{i}\in \Lambda_{i}$ and $1\leq i\leq k$. Thus, $(\Phi_{g_{i}}(x), \Phi_{g_{i}}(x_{i}))\in U$ for all $g_{i}\in \Lambda_{i}$, $1\leq i\leq k$. Therefore, we conclude that $\Phi$ has specification. Similarly, one can prove that $\Psi$ has specification. Moreover if the point $(x, y)$ is a periodic point for $\varphi$, then $x$ and $y$ are also periodic points for $\Phi$ and $\Psi$ respectively. Therefore if $\varphi$ has periodic specification, then both $\Phi$ and $\Psi$ have periodic specification.   
\end{proof} 

\begin{theorem}
Specification, periodic specification are uniform dynamical property.  
\label{4.3}
\end{theorem}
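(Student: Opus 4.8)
The plan is to prove directly that having specification (and having periodic specification) is preserved under uniform conjugacy; by the definition of a uniform dynamical property given in the preliminaries this is precisely the assertion of the theorem. So suppose $\Phi\in Act(G,(X,\mathcal{U}))$ has specification and let $T:(X,\mathcal{U})\to(Y,\mathcal{V})$ be a uniform conjugacy onto $\Psi\in Act(G,(Y,\mathcal{V}))$, i.e.\ a uniform equivalence with $T\Phi_g=\Psi_gT$ for every $g\in G$. The goal is to exhibit, for each $V\in\mathcal{V}^s$, a common specification integer for $\Psi$.

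First I would fix $V\in\mathcal{V}^s$ and use the uniform continuity of $T$ to pick $U\in\mathcal{U}$ with $(a,b)\in U\Rightarrow(T(a),T(b))\in V$; replacing $U$ by $U\cap U^{-1}$ we may assume $U\in\mathcal{U}^s$. Since $\Phi$ has specification, Definition~\ref{4.1} supplies an integer $c(U)>0$ serving as a specification integer for every point of $X$ simultaneously, and I claim $c(U)$ is also a specification integer for $\Psi$ associated to $V$. To verify this, take $k\in\field{N}$, a finite family $(\Lambda_i)_{i=1}^k$ of subsets of $G$ with $d_G(\Lambda_i,\Lambda_j)>c(U)$ for $i\neq j$, and points $(y_i)_{i=1}^k$ in $Y$. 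Put $x_i:=T^{-1}(y_i)$; since $x_1$ is a specification point of $\Phi$ with the common integer $c(U)$, there is a tracing point $x\in X$ with $(\Phi_g(x),\Phi_g(x_i))\in U$ for all $g\in\Lambda_i$ and $1\le i\le k$. Then $y:=T(x)$ works for $\Psi$: applying $T$ and the intertwining relation gives $(\Psi_g(y),\Psi_g(y_i))=(T\Phi_g(x),T\Phi_g(x_i))\in V$ for all $g\in\Lambda_i$ and $1\le i\le k$. Hence $\Psi$ has specification, and the same argument with $T^{-1}$ in place of $T$ yields the reverse implication.

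For the periodic case I would only add the observation that a uniform equivalence is a bijection intertwining the two actions, so for any $x\in X$ the $\Psi$-orbit of $T(x)$ is the image under $T$ of the $\Phi$-orbit of $x$; in particular $T$ carries periodic points to periodic points. Thus if the tracing point $x$ constructed above can be chosen periodic for $\Phi$, then $y=T(x)$ is periodic for $\Psi$, so periodic specification is preserved as well.

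I do not anticipate a genuine obstacle here: this is a transport-of-structure argument along $T$. The only points needing a little care are bookkeeping ones — that ``$\Phi$ has specification'' furnishes a single integer $c(U)$ valid for all admissible choices of the distinguished first point, so that we may legitimately take $x_1=T^{-1}(y_1)$; that the relation $T\Phi_g=\Psi_gT$ is used in the correct direction when converting $U$-closeness in $X$ into $V$-closeness in $Y$; and that, by the preceding theorem on independence of the generator, the dependence of $d_G$ on $G_1$ plays no role.
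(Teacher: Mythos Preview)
Your proof is correct and follows essentially the same transport-of-structure argument as the paper: pull back an entourage along the uniform conjugacy, transfer the given points via the bijection, apply specification in the source space, and push the tracing point forward using the intertwining relation $T\Phi_g=\Psi_gT$; the periodic case is handled identically in both by noting that conjugacies preserve finiteness of orbits. The only cosmetic difference is direction (the paper takes $\gamma:Y\to X$ and transfers specification from $\Psi$ to $\Phi$, you take $T:X\to Y$), and your closing remark about independence of the generator is unnecessary since both actions are of the same group $G$ with the same $G_1$, so $d_G$ is literally unchanged.
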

\begin{proof}
Let $\gamma : Y\rightarrow X$ be a uniform conjugacy between $\Phi\in Act(G,(X,\mathcal{U}))$ and $\Psi\in Act(G, (Y, \mathcal{V}))$. So, $\gamma$ is a uniform equivalence satisfying $\Phi\circ \gamma = \gamma\circ \Psi$. Let $U\in \mathcal{U}^{s}$ and $\Gamma = \gamma\times \gamma$. By uniform continuity of $\Gamma$, there exists $V\in \mathcal{V}^{s}$ such that $\Gamma(V)\subset U$. Suppose that $\Psi$ has specification and $c(V)$ is a specification integer for $\Psi$. Let $\lbrace \Lambda_{i}\rbrace_{i=1}^k$ be a finite family of subsets of $G$ such that $d_{G}(\Lambda_{i},\Lambda_{j})> c(V)$ and $\lbrace x_{i}\rbrace_{i=1}^k$ be points in $X$. Since $\gamma$ is bijection, there exists unique points $\lbrace y_{1},. . . , y_{k}\rbrace$ in $Y$ such that $\gamma(y_{i})=x_{i}$ for $1\leq i\leq k$. By specification of $\Psi$, there exists $y\in Y$ such that $(\Psi_{g_{i}}(y), \Psi_{g_{i}}(y_{i}))\in V$ for all $g_{i}\in \Lambda_{i}$, $1\leq i\leq k$. If $\gamma(y)=z$, then $(\Psi_{g_{i}}(\gamma^{-1}z), \Psi_{g_{i}}(\gamma^{-1}x_{i}))= (\gamma^{-1}\Phi_{g_{i}}(z), \gamma^{-1} \Phi_{g_{i}}(x_{i}))\in V$. Therefore, $(\Phi_{g_{i}}(z),\Phi_{g_{i}}(x_{i}))\in U$ for all $g_{i}\in \Lambda_{i}$, $1\leq i\leq k$. Therefore, we conclude that $\Phi$ has specification. Converse follows similarly because $\gamma$ is a uniform equivalence. Moreover if the point $y$ is periodic for $\Psi$ then $z$ is also periodic for $\Phi$. Therefore, if $\Psi$ has periodic specification then $\Phi$ has periodic specification and vice versa.   
\end{proof}

\section{Specification implies Positive Entropy and Devaney Chaos}

Recall that a continuous action is said to be Devaney chaotic if it is transitive, admits dense set of periodic points and has sensitive dependence on initial condition. In this section, our first aim is to prove that on infinite Hausdorff uniform space certain group actions having periodic specification is Devaney chaotic. Using similar steps as in the proof of Theorem 2 \cite{SC}, one can prove that any transitive action admitting dense set of periodic points on infinite Hausdorff uniform space is sensitive. Therefore, it is sufficient to show that such group actions are transitive and has dense set of periodic points.  
\medskip

Recall that an action is said to have strong mixing property if for any pair of non-empty open sets $\mathfrak{U}$ and $\mathfrak{V}$ in $X$, cardinality of the set $G(\mathfrak{U},\mathfrak{V} ) = \lbrace g\in G$ : $\Phi_{g}\mathfrak{U} \cap \mathfrak{V}= \phi\rbrace$ is finite. 

\begin{theorem}
Let $G$ be an infinite order group and $\Phi\in Act(G,X)$. If $\Phi$ has specification, then $\Phi$ is strongly mixing and hence, transitive. 
\label{4.5}
\end{theorem}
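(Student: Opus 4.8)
The plan is to prove strong mixing directly and then read off transitivity from the fact that $G$ is infinite. First I would fix non-empty open sets $\mathfrak{U},\mathfrak{V}\subset X$, choose points $u\in\mathfrak{U}$ and $v\in\mathfrak{V}$, and use the fact that the uniformity generates the topology to pick symmetric entourages $W_{1},W_{2}\in\mathcal{U}^{s}$ with $W_{1}[u]\subset\mathfrak{U}$ and $W_{2}[v]\subset\mathfrak{V}$; then $W=W_{1}\cap W_{2}\in\mathcal{U}^{s}$ is a single symmetric entourage with $W[u]\subset\mathfrak{U}$ and $W[v]\subset\mathfrak{V}$.

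Next, since $\Phi$ has specification, the point $u$ is a specification point, so Definition~\ref{4.1} provides an integer $c=c(W)>0$. The key claim is the inclusion $G(\mathfrak{U},\mathfrak{V})\subseteq G_{c}$. To prove it, take any $g\in G\setminus G_{c}$, so that $d_{G}(\{e\},\{g\})>c$, and apply the specification property with $k=2$, the family $\Lambda_{1}=\{e\}$ and $\Lambda_{2}=\{g\}$ (which satisfies $d_{G}(\Lambda_{1},\Lambda_{2})>c$), and the points $x_{1}=u$, $x_{2}=\Phi_{g^{-1}}(v)$. This yields a tracing point $x\in X$ with $(\Phi_{e}(x),\Phi_{e}(u))\in W$ and $(\Phi_{g}(x),\Phi_{g}(\Phi_{g^{-1}}(v)))\in W$, that is, $(x,u)\in W$ and $(\Phi_{g}(x),v)\in W$, using conditions (ii) and (iii) of the definition of an action. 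Symmetry of $W$ then gives $x\in W[u]\subset\mathfrak{U}$ and $\Phi_{g}(x)\in W[v]\subset\mathfrak{V}$, so $\Phi_{g}(x)\in\Phi_{g}(\mathfrak{U})\cap\mathfrak{V}$ and hence $g\notin G(\mathfrak{U},\mathfrak{V})$. Since $G_{c}$, the set of group elements of word length at most $c$ with respect to the finite generating set $G_{1}$, is finite, $G(\mathfrak{U},\mathfrak{V})$ is finite, which is exactly strong mixing.

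Transitivity is then immediate: $G$ is infinite while $G(\mathfrak{U},\mathfrak{V})$ is finite, so there exists $g\in G$ with $\Phi_{g}(\mathfrak{U})\cap\mathfrak{V}\neq\phi$.

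I do not expect a genuine obstacle here; the whole argument is a single two-block application of specification. The only steps needing a little care are producing one symmetric entourage $W$ that works at both $u$ and $v$ simultaneously (handled by intersecting $W_{1}$ and $W_{2}$), recording that $\Phi_{g}$ is invertible with inverse $\Phi_{g^{-1}}$ so that $x_{2}=\Phi_{g^{-1}}(v)$ makes sense and satisfies $\Phi_{g}(x_{2})=v$, and invoking finiteness of the ball $G_{c}$ in a finitely generated group — this is where finiteness of the generating set is used, and, together with $G$ being infinite, is what upgrades strong mixing to transitivity.
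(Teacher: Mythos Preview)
Your argument is correct and follows essentially the same route as the paper: a two-block application of specification with $\Lambda_{1}=\{e\}$, $\Lambda_{2}$ containing the target element $g$, and $x_{2}=\Phi_{g^{-1}}(v)$, yielding $\Phi_{g}(\mathfrak{U})\cap\mathfrak{V}\neq\phi$ for every $g$ outside a fixed finite ball $G_{c}$. If anything, your version is slightly tidier, since you explicitly intersect entourages to get a single $W$ working at both basepoints and take $\Lambda_{2}=\{g\}$ rather than the paper's larger $\Lambda_{2}=G\setminus G_{c(U)+1}$, which is unnecessary once $g$ is fixed.
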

\begin{proof} 
Let $\mathfrak{V}$ and $\mathfrak{W}$ be non-empty open subsets of $X$ with $v\in \mathfrak{V}$ and $w\in \mathfrak{W}$. 
Choose $U\in \mathcal{U}^{s}$ such that $U[v]\subset \mathfrak{V}$ and $U[w]\subset \mathfrak{W}$. Choose a specification integer $c(U)$ for $\Phi$, $h = g^{-1}$ for some $g\in G\setminus G_{c(U)+1}$, $\Lambda_{1} =\lbrace  e \rbrace$ and $\Lambda_{2} = G\setminus G_{c(U)+1}$. Set $x_{1}=v$ and $x_{2}= \Phi_{h}(w)$. By specification of $\Phi$ there exists $x\in X$ such that $(\Phi_{g_{i}}(x),\Phi_{g_{i}}(x_{i}))\in U$ for all $g_{i}\in \Lambda_{i}$, $1\leq i\leq 2$. Thus, $x\in \mathfrak{V} $ and $\Phi_{h^{-1}}(x)\in \mathfrak{W} $. Since $g$ was chosen arbitrary, we get that $\Phi_{g}\mathfrak{V}\cap \mathfrak{W}\neq \phi$ for all $g\in G\setminus G_{c(U)+1}$ and hence, $\Phi$ is strongly mixing. 
\end{proof}

\begin{theorem}
If $G$ contains an element of infinite order and $\Phi\in Act(G,X)$ has periodic specification, then it has dense set of periodic points.
\label{4.6}
\end{theorem}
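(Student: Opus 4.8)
The plan is to prove the equivalent statement that every non-empty open subset $\mathfrak{V}$ of $X$ meets the set of periodic points of $\Phi$. What makes this short is that the definition of a periodic specification point already yields a \emph{periodic} tracing point, and for density we only need to invoke that definition in the most degenerate configuration: a single index $k=1$ together with the trivial subset $\Lambda_{1}=\lbrace e\rbrace$ of $G$. For that configuration the separation requirement $d_{G}(\Lambda_{i},\Lambda_{j})>c(U_{z})$ is vacuously satisfied, and the tracing condition collapses to a single entourage constraint at the identity element.

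Concretely, given a non-empty open $\mathfrak{V}\subset X$, I would fix $z\in\mathfrak{V}$ and, using that the topology on $X$ is the one induced by $\mathcal{U}$, choose $U\in\mathcal{U}^{s}$ with $U[z]\subset\mathfrak{V}$ (if an arbitrary entourage witnesses the openness of $\mathfrak{V}$ at $z$, replace it by its symmetrisation). Since $\Phi$ has periodic specification, $z$ is a periodic specification point; applying the property to this $U$ with $k=1$, $\Lambda_{1}=\lbrace e\rbrace$ and $x_{1}=z$ produces a periodic point $x\in X$ with $(\Phi_{e}(x),\Phi_{e}(x_{1}))=(x,z)\in U$. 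As $U$ is symmetric this gives $x\in U[z]\subset\mathfrak{V}$, so $\mathfrak{V}$ contains a periodic point, and since $\mathfrak{V}$ was arbitrary the periodic points of $\Phi$ are dense in $X$. If one prefers a slightly less degenerate configuration, one may instead take $\Lambda_{1}=\lbrace g^{j}:0\leq j\leq n\rbrace$ for an infinite order element $g\in G$, but this extra orbit segment is not needed.

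I do not expect a genuine obstacle here; the only points worth pausing on are that the $\Lambda_{i}$ in the definition are allowed to be arbitrary subsets of $G$, in particular finite and even singleton, so the configuration above is legitimate, and that the neighbourhood filter of $z$ in the uniform topology is exactly $\lbrace U[z]:U\in\mathcal{U}^{s}\rbrace$, so that $U$ can be squeezed inside $\mathfrak{V}$. The role of the hypothesis that $G$ contains an element of infinite order is to make the statement non-vacuous — for finite $G$ every point of $X$ trivially has a finite orbit and is therefore periodic — and to keep this result parallel to the transitivity theorem just above, so that the two, together with the sensitivity remark at the beginning of this section, combine to give Devaney chaos.
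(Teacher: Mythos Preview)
Your argument is correct and follows the same template as the paper: pick a point in an arbitrary open set, shrink a symmetric entourage inside it, and invoke periodic specification to obtain a periodic tracing point landing in that entourage-neighbourhood. The only difference is the configuration you feed into the definition. The paper chooses a finite symmetric generating set containing an infinite order element $s$, sets $\Lambda_{i}=\lbrace s^{c(U)(i-1)}\rbrace$ for $1\leq i\leq k$ with some auxiliary points $x_{i}$ (only $x_{1}=x$ matters), and then reads off the $i=1$ relation $(x',x)\in U$. You instead take the degenerate case $k=1$, $\Lambda_{1}=\lbrace e\rbrace$, for which the separation constraint is vacuous, and arrive at the same conclusion in one step.

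Your version is the cleaner one: the extra singletons in the paper's proof are never used after the tracing point is produced, and your observation that the infinite order hypothesis plays no genuine role here is accurate --- if $G$ is finite then every orbit is finite and every point is already periodic, so the conclusion is trivial. The hypothesis is only there so that this theorem can be paired with the transitivity result to yield Devaney chaos.
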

\begin{proof}
Consider a finite symmetric generating set $G_{1}$ containing an infinite order element $s$. Let $x\in X$, $\mathfrak{V}$ be a neighbourhood of $x$. Let $U\in \mathcal{U}^{s}$ be such that $U[x]\subset \mathfrak{V}$. Choose a specification integer $c(U)$ for $\Phi$. Consider $\lbrace \Lambda_{i} = s^{c(U)(i-1)}\rbrace_{i=1}^{k}$ and $\lbrace x_{i}\rbrace_{i=1}^{k}$ with $x_{1}=x$. By periodic specification there exists a periodic point $x'\in X$ such that $(\Phi_{g_{i}}(x'), \Phi_{g_{i}}(x_{i}))\in U$ for all $g_{i}\in \Lambda_{i}$, $1\leq i\leq k$. In particular, $(x',x)\in U$. Since $U$ is symmetric, we have $x'\in \mathfrak{V}$. Hence, every open set in $X$ contains a periodic point which means the set of all periodic points of $\Phi$ is dense in $X$.  
\end{proof}

\begin{Corollary}
Let $G$ be a group containing an element of infinite order and $X$ be an infinite Hausdorff uniform space. If $\Phi\in Act(G,X)$ has periodic specification, then it is Devaney chaotic.
\label{4.7}
\end{Corollary}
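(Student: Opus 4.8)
The plan is to assemble the Corollary directly from the three results that immediately precede it, together with the sensitivity statement already granted in the section preamble. Recall that Devaney chaos requires three ingredients: transitivity, a dense set of periodic points, and sensitive dependence on initial conditions. Each of these will be obtained from a result already available.

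First I would observe that since $G$ contains an element of infinite order, $G$ is an infinite group, so Theorem \ref{4.5} applies: $\Phi$ having periodic specification in particular has specification, hence $\Phi$ is strongly mixing and therefore transitive. Second, Theorem \ref{4.6} applies verbatim under the hypothesis that $G$ contains an infinite order element and $\Phi$ has periodic specification, yielding that the set of periodic points of $\Phi$ is dense in $X$. Third, for sensitivity I would invoke the remark made at the start of the section: using the argument of \cite[Theorem 2]{SC}, any transitive action with a dense set of periodic points on an infinite Hausdorff uniform space is sensitive. Here the hypotheses that $X$ is infinite and Hausdorff are exactly what this step consumes (it is also where one needs $X$ to have more than one point so that two periodic orbits can be separated). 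Combining the three, $\Phi$ is Devaney chaotic.

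Since every step is a direct citation of an already-established result, there is essentially no genuine obstacle; the only point requiring a moment's care is the bookkeeping of hypotheses — checking that "contains an infinite order element'' delivers both "$G$ infinite'' (needed for Theorem \ref{4.5}) and the literal hypothesis of Theorem \ref{4.6}, and that "infinite Hausdorff'' is precisely what the sensitivity argument borrowed from \cite{SC} requires. I would therefore write the proof as three short sentences invoking Theorem \ref{4.5}, Theorem \ref{4.6}, and the \cite{SC}-type sensitivity argument respectively, and conclude.

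\begin{proof}
Since $G$ contains an element of infinite order, $G$ is infinite, so by Theorem \ref{4.5} the action $\Phi$, having periodic specification and hence specification, is strongly mixing and therefore transitive. By Theorem \ref{4.6}, $\Phi$ has a dense set of periodic points. Finally, as noted above, using the argument of Theorem 2 in \cite{SC}, a transitive action with a dense set of periodic points on an infinite Hausdorff uniform space is sensitive. Hence $\Phi$ is transitive, has a dense set of periodic points, and is sensitive, i.e. $\Phi$ is Devaney chaotic.
\end{proof}
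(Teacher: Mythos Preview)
Your proposal is correct and matches the paper's own argument: the paper leaves the Corollary unproved precisely because the section preamble already explains that sensitivity follows from \cite{SC} once transitivity and density of periodic points are established, and Theorems \ref{4.5} and \ref{4.6} supply those two ingredients under the stated hypotheses. Your hypothesis bookkeeping (infinite order element $\Rightarrow$ $G$ infinite for Theorem \ref{4.5}; periodic specification $\Rightarrow$ specification) is exactly what the paper relies on implicitly.
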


\begin{Proposition}
Positivity of entropy of an action $\Phi\in Act(G,X)$ is independent of choice of generator. 
\end{Proposition}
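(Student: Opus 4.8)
The plan is to use the fact that the word metrics on $G$ associated to two different finite symmetric generating sets are bi-Lipschitz equivalent, so that the separated (equivalently, spanning) quantities transfer between the two generators at the cost of a \emph{linear} rescaling of the index $n$ together with a multiplicative constant. Since positivity of the $\limsup$ defining entropy is insensitive to both operations, independence of positivity follows. It is worth stressing from the outset that the \emph{value} of the entropy is genuinely not generator-independent — already for $G=\field{Z}$ it can be multiplied by a constant by passing from one generating set to another — so the statement is, as it should be, only about positivity.

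First I would fix two finite symmetric generating sets $G_{1}$ and $H_{1}$ of $G$. Because $H_{1}$ generates $G$ and $G_{1}$ is finite, there is an integer $N\geq 1$ such that every element of $G_{1}$ is a product of at most $N$ elements of $H_{1}$; symmetrically there is $M\geq 1$ such that every element of $H_{1}$ is a product of at most $M$ elements of $G_{1}$. Writing $G_{n}$ and $H_{n}$ for the balls of radius $n$ in the word metrics of $G_{1}$ and $H_{1}$ respectively, this gives the inclusions $G_{n}\subset H_{Nn}$ and $H_{n}\subset G_{Mn}$ for all $n\geq 0$.

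Next I would fix $\mathfrak{K}\in\mathcal{K}(X)$ and $U\in\mathcal{U}^{s}$, and write $s_{n}(U,\mathfrak{K},\Phi;G_{1})$ and $s_{n}(U,\mathfrak{K},\Phi;H_{1})$ for the maximal-separated-set quantities computed using $G_{n}$ and $H_{n}$. If a subset of $\mathfrak{K}$ is $(n,U)$-separated with respect to $G_{1}$, then for any two of its distinct points the element separating them lies in $G_{n}\subset H_{Nn}$, so the set is $(Nn,U)$-separated with respect to $H_{1}$; hence $s_{n}(U,\mathfrak{K},\Phi;G_{1})\leq s_{Nn}(U,\mathfrak{K},\Phi;H_{1})$. (One could equally argue with spanning sets, using that an $(Nn,U)$-spanning set for $\mathfrak{K}$ relative to $H_{1}$ is automatically $(n,U)$-spanning relative to $G_{1}$.) Applying $\frac{1}{n}\log$ and $\limsup_{n\to\infty}$, and using that $\{Nn:n\in\field{N}\}$ is a subsequence of $\field{N}$, I obtain
\[
s_{\Phi,G_{1}}(U,\mathfrak{K})\;\leq\;N\limsup_{n\to\infty}\frac{1}{Nn}\log s_{Nn}(U,\mathfrak{K},\Phi;H_{1})\;\leq\;N\,s_{\Phi,H_{1}}(U,\mathfrak{K}).
\]
Letting $U$ range over the net $\mathcal{U}^{s}$ and invoking Definition \ref{3.2} yields $h(G_{1},\Phi,\mathfrak{K},\mathcal{U})\leq N\,h(H_{1},\Phi,\mathfrak{K},\mathcal{U})$, and taking the supremum over $\mathfrak{K}\in\mathcal{K}(X)$ gives $h(G_{1},\Phi,\mathcal{U})\leq N\,h(H_{1},\Phi,\mathcal{U})$. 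The symmetric argument gives $h(H_{1},\Phi,\mathcal{U})\leq M\,h(G_{1},\Phi,\mathcal{U})$, and therefore $h(G_{1},\Phi,\mathcal{U})>0$ if and only if $h(H_{1},\Phi,\mathcal{U})>0$, which is the assertion.

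I do not expect a genuine obstacle here. The only points that require a little care are: deducing the ball inclusions $G_{n}\subset H_{Nn}$ directly from the definition of the word metric; the routine $\limsup$ estimate along the subsequence $\{Nn\}$ (which uses only that a limit superior over a subsequence does not exceed the limit superior over the whole sequence, so no monotonicity of $s_{n}$ is needed); and — conceptually the crux — keeping track of the fact that only positivity, not the numerical value, is preserved, so that the multiplicative constants $N$ and $M$ are harmless.
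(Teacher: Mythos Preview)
Your proof is correct and follows essentially the same route as the paper: both use the ball inclusion $G_{n}\subset H_{Nn}$ coming from $G_{1}\subset H_{N}$, deduce $s_{n}(\,\cdot\,;G_{1})\leq s_{Nn}(\,\cdot\,;H_{1})$, pass to the $\limsup$ along the subsequence $\{Nn\}$ to obtain $h(G_{1},\Phi,\mathcal{U})\leq N\,h(H_{1},\Phi,\mathcal{U})$, and then argue symmetrically. Your added remarks that only positivity (not the numerical value) survives, and that no monotonicity of $s_{n}$ is needed, are welcome clarifications but do not change the argument.
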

\begin{proof}
Let $G_{1}=\lbrace g_{i} \mid 1\leq i\leq p \rbrace$ and $H_{1}=\lbrace h_{i} \mid 1\leq i \leq q \rbrace$ be two generators. Set $G=\cup_{n\geq 0} G_{n}$, where $G_{0}=\lbrace e\rbrace$ and $G=\cup_{n\geq 0} H_{n}$, where $H_{0}=\lbrace e\rbrace$. 
For $U\in \mathcal{U}^{s}$, $s_{n}^{G_{1}}(U,\mathfrak{K}, \Phi)$ and $s_{n}^{H_{1}}(U,\mathfrak{K}, \Phi)$ denotes the maximal cardinality of $(n,U)$-separated subset of any $\mathfrak{K}\in \mathcal{K}(X)$, with respect to the generators $G_{1}$ and $H_{1}$ respectively. Then we can choose $m_{1}\in \field{N}$ such that $G_{1}\subset H_{m_{1}}$ and thus we have $G_{n}\subset H_{nm_{1}}$ for all $n\in \field{N}$. It is easy to check that, for $\mathfrak{K}\in\mathcal{K}(X)$ and $U\in \mathcal{U}^{s}$, we have $s_{nm_{1}}^{H_{1}}(U,\mathfrak{K}, \Phi)\geq s_{n}^{G_{1}}(U,\mathfrak{K}, \Phi)$. Hence $\limsup\limits_{n\rightarrow \infty} \frac{1}{n}log\hspace*{0.1cm} s_{n}^{H_{1}}(U,\mathfrak{K}, \Phi) \geq \limsup\limits_{n\rightarrow \infty} \frac{1}{nm_{1}}log\hspace*{0.1cm} s_{nm_{1}}^{H_{1}}(U,\mathfrak{K}, \Phi) \geq \frac{1}{m_{1}}\limsup\limits_{n\rightarrow \infty} \frac{1}{n}log\hspace*{0.1cm} s_{n}^{G_{1}}(U,\mathfrak{K}, \Phi),$ implies $m_{1}s_{\Phi}^{H_{1}}(U,\mathfrak{K})\geq s_{\Phi}^{G_{1}}(U,\mathfrak{K})$. Therefore, $m_{1}h(H_{1},\Phi, \mathfrak{K},$ $\mathcal{U}) \geq h(G_{1},\Phi, \mathfrak{K},\mathcal{U}) $ for all $U\in \mathcal{U}^{s}$ and $\mathfrak{K}\in \mathcal{K}(X)$ and hence, $m_{1}h(H_{1},\Phi, \mathcal{U})\geq h(G_{1},\Phi, \mathcal{U})$. Similarly, we can choose $m_{2}\in \field{N}$ such that $m_{2}h(G_{1},\Phi, \mathcal{U})\geq h(H_{1},\Phi, \mathcal{U})$. Hence if entropy is positive with respect to $G_{1}$ then it is  positive with respect to $H_{1}$ and conversely.
\end{proof} 

\begin{theorem}
Let $(X,\mathcal{U})$ be a Hausdorff uniform space. If $G$ contains an element of infinite order and $\Phi\in Act(G,X)$ has two distinct specification points then entropy of $\Phi$ is positive. 
\label{4.8}
\end{theorem}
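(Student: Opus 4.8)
The plan is to run a coding argument. Using the two specification points $z_1\neq z_2$ together with an element $s\in G$ of infinite order, I will manufacture, for every $n$, roughly $2^{n}$ orbits that are pairwise $(O(n),U)$-separated for one fixed entourage $U$, and then extract positive entropy from this exponential count. First I would normalise the data. Since $X$ is Hausdorff, $\bigcap_{V\in\mathcal U}V=\Delta(X)$, so I fix $U_0\in\mathcal U^{s}$ with $(z_1,z_2)\notin U_0$ and then $U\in\mathcal U^{s}$ with $U^{3}\subseteq U_0$. As being a specification point and positivity of entropy are both independent of the generating set, I may assume the finite symmetric generating set $G_1$ contains $s$. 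Let $c=c(U_{z_1})$ be a specification integer of $z_1$ for $U$. Since $s$ has infinite order and each $G_c$ is a finite set, only finitely many powers of $s$ lie in $G_c$, so I may pick $M\in\field{N}$ with $s^{k}\in G_c\Rightarrow|k|<M$; note also $s^{jM}\in G_{jM}$ for every $j\geq0$ because $s\in G_1$.

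Now I build the separated sets. Fix $n\geq2$, and for each $\omega=(\omega_2,\dots,\omega_n)\in\{1,2\}^{n-1}$ apply the specification-point property of $z_1$ to the sets $\Lambda_i=\{s^{(i-1)M}\}$ for $1\leq i\leq n$ (so $\Lambda_1=\{e\}$) and the points $x_1=z_1$, $x_i=\Phi_{s^{-(i-1)M}}(z_{\omega_i})$ for $2\leq i\leq n$; the spacing hypothesis holds since $d_G(\Lambda_i,\Lambda_j)=d^{G}(e,s^{|i-j|M})>c$ for $i\neq j$, as $|i-j|M\geq M$. This yields $y_\omega\in X$ with $(y_\omega,z_1)\in U$ and $(\Phi_{s^{(i-1)M}}(y_\omega),z_{\omega_i})\in U$ for $2\leq i\leq n$. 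If $\omega\neq\omega'$, pick $i$ with $\{\omega_i,\omega'_i\}=\{1,2\}$; then $\Phi_{s^{(i-1)M}}(y_\omega)$ and $\Phi_{s^{(i-1)M}}(y_{\omega'})$ lie, in one order or the other, in $U[z_1]$ and $U[z_2]$, so $(\Phi_{s^{(i-1)M}}(y_\omega),\Phi_{s^{(i-1)M}}(y_{\omega'}))\in U$ would give $(z_1,z_2)\in U^{3}\subseteq U_0$, which is false. Since $s^{(i-1)M}\in G_{(n-1)M}$, the set $E_n=\{y_\omega:\omega\in\{1,2\}^{n-1}\}$ is $((n-1)M,U)$-separated with $|E_n|=2^{n-1}$. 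Granting that all these points, over all $n$, lie in one compact $\mathfrak K\in\mathcal K(X)$, an $(\ell,U)$-separated set remains $(m,U)$-separated for $m\geq\ell$ because $G_\ell\subseteq G_m$, so $s_m(U,\mathfrak K,\Phi)\geq2^{n-1}$ whenever $m\geq(n-1)M$; hence $s_\Phi(U,\mathfrak K)=\limsup_m\frac1m\log s_m(U,\mathfrak K,\Phi)\geq\frac{\log 2}{M}>0$, and by Lemma~\ref{3.1}(2) and the definition of entropy, $h(G_1,\Phi,\mathcal U)\geq h(G_1,\Phi,\mathfrak K,\mathcal U)\geq\frac{\log 2}{M}>0$.

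The step I expect to be the genuine obstacle is exactly the one just granted: confining the tracing points of \emph{all} scales $n$ to a single compact subset of $X$. A fixed finite $E_n$ by itself contributes nothing, because entropy records exponential growth in the time parameter $m$, not growth of cardinality for fixed data, so one truly needs one compact set rich enough to carry $(m,U)$-separated subsets of size exponential in $m$; the coding and the separation above are routine, and it is the possible non-compactness of $X$ that causes the trouble. I would try to resolve it by choosing the $y_\omega$ coherently along a binary tree --- at stage $n+1$ adding to the specification request the demand that $y_{(\omega,\delta)}$ stay close to $y_\omega$ (equivalently, close to $z_1$) in the time slot $e$ --- so that $\bigcup_n E_n$ becomes totally bounded, and then take $\mathfrak K$ to be its closure, using completeness of $(X,\mathcal U)$ (or passing to a compact subset of the completion that still lies in $X$) to see that this closure is compact.
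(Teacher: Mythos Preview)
Your coding argument is precisely the paper's: place singletons $\Lambda_i=\{s^{(i-1)M}\}$ along the powers of an infinite-order generator $s\in G_1$, code a binary word by the choice of target points in $\{z_1,z_2\}$, and harvest roughly $2^n$ points that are $(nM,U)$-separated, giving entropy at least $(\log 2)/M$. Your version is in fact tidier than the paper's on two small points: you work with the specification property at $z_1$ alone (the paper invokes both points, splitting according to which of $x,y$ sits in the first slot), and you verify $(nM,U)$-\emph{separation} directly via $U^{3}\subseteq U_0$, whereas the paper as written argues only that distinct tuples yield \emph{distinct} tracing points (``if $z=z'$ then $(x,y)\in U^{2}$''), which is weaker than what the entropy estimate actually requires. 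The obstacle you single out --- housing all the $E_n$ inside a single compact $\mathfrak K$ --- is exactly the step the paper does not address: its chain of inequalities simply writes $s_{nM}(U,\mathfrak K,\Phi)\geq 2^{n}$ with an unspecified $\mathfrak K$ and proceeds formally. So your concern is legitimate, your argument is no less complete than the published one, and the completeness/tree refinement you sketch already goes beyond anything the paper supplies.
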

\begin{proof}
Consider a finite symmetric generating set $G_{1}$ containing an infinite order element $s$. Let $x,y\in X$ be two distinct specification points, $U\in \mathcal{U}^{s}$ such that $(x,y)\notin U^{2}$ and set $M = c(U) = max\lbrace c(U_{x}), c(U_{y})\rbrace$. Choose two $(n+1)$-tuples $(z_{1},. . . , z_{n+1})$ and $(z_{1}^{'},. . . ,z_{n+1}^{'})$ with $z_{1}=x$, $z_{1}^{'}=y$, $z_{i}, z_{i}^{'}\in \lbrace x,y\rbrace$ for all $2\leq i\leq (n+1)$ and $\Lambda_{i} = \lbrace s^{c(U)(i-1)}\rbrace$ for all $1\leq i \leq (n+1)$. Choose $z,z'\in X$ due to the specification at $x$ and $y$ respectively. First observe that $z\neq z'$. 
If $z=z'$ then $(\Phi_{g_{i}}(z),\Phi_{g_{i}}(z_{i}))\in U$ and $(\Phi_{g_{i}}(z),\Phi_{g_{i}}(z_{i}^{'}))\in U$ for all $g_{i}\in \Lambda_{i}$ and all $1\leq i\leq (n+1)$. For $i=1$ we get that $(z,z_{1})\in U$ and $(z,z_{1}^{'})\in U$ implies $(x,y)\in U^{2}$ which is a contradiction. Consider two $(n+2)$-tuples $(z_{1},. . . , z_{n+1}, z_{n+2})$ and $(z_{1}^{'},. . . ,z_{n+1}^{'}, z_{n+2}^{'})$ with $z_{1}=z_{1}^{'}\in \lbrace x,y\rbrace$, $z_{i}, z_{i}^{'}\in \lbrace x,y\rbrace$ for all $2\leq i\leq (n+1)$, $z_{n+2} = \Phi_{s^{-(c(U)(n+1))}}(x)$, $z_{n+2}^{'} = \Phi_{s^{-(c(U)(n+1))}}(y)$ and $\Lambda_{i} = \lbrace s^{c(U)(i-1)}\rbrace$ for all $1\leq i \leq (n+2)$. Using the similar arguments, we can choose distinct tracing points for these tuples. Therefore, for $(n+1)$-tuples we can choose a distinct tracing points due to specification at $x$ and $y$. Thus there are atleast $2^{n}$ , $(nM, U)$-separated points. Therefore, $h(G_{1},\Phi, \mathcal{U}) = sup\lbrace h(G_{1},\Phi, \mathfrak{K}, \mathcal{U}) : \mathfrak{K}\in \mathcal{K}(X) \rbrace \geq \lim \lbrace s_{\Phi}(U, \mathfrak{K}) : U\in \mathcal{U}^{s}\rbrace = \lim \lbrace \limsup\limits_{n\rightarrow \infty} \frac{1}{n}log\hspace*{0.1cm} s_{n}(U,\mathfrak{K}, \Phi) : U\in \mathcal{U}^{s}\rbrace \geq \limsup\limits_{n\rightarrow \infty} \frac{1}{n}log\hspace*{0.1cm} s_{n}(U,\mathfrak{K}, \Phi) \geq  \limsup\limits_{n\rightarrow \infty} \frac{1}{nM}log\hspace*{0.1cm} s_{nM}(U,\mathfrak{K}, \Phi) \geq  \limsup\limits_{n\rightarrow \infty} \frac{1}{nM} log\hspace*{0.1cm} 2^{n}= \frac{log\hspace*{0.1cm} 2}{M}>0$, which completes the proof. 
\end{proof}

\begin{Corollary}
Let $G$ be a group containing an element of infinite order and $X$ be a Hausdorff uniform space containing more than one point. If $\Phi\in Act(G,X)$ has specification, then the entropy of $\Phi$ is positive. 
\label{4.9}
\end{Corollary}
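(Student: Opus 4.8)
The plan is to deduce this immediately from Theorem~\ref{4.8}, so the main task is to verify that the hypotheses of that theorem are met. First I would recall from Definition~\ref{4.1} that an action $\Phi$ having specification means precisely that \emph{every} point $z\in X$ is a specification point (indeed a specification point with a common specification integer $c(U_z)$). Since the hypothesis asserts that $X$ contains more than one point, I can pick two distinct points $x,y\in X$; by the observation just made, both $x$ and $y$ are specification points of $\Phi$, so $\Phi$ possesses (at least) two distinct specification points.

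Next I would check the remaining hypotheses of Theorem~\ref{4.8}: by assumption $(X,\mathcal{U})$ is a Hausdorff uniform space, and $G$ contains an element of infinite order. These are exactly the standing assumptions of Theorem~\ref{4.8}, so that theorem applies verbatim to $\Phi$ and yields $h(G_{1},\Phi,\mathcal{U})>0$. By the Proposition preceding Theorem~\ref{4.8}, positivity of entropy is independent of the choice of generating set, so the conclusion is generator-independent and we may simply say the entropy of $\Phi$ is positive, completing the argument.

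I do not expect any genuine obstacle here: the corollary is a direct specialization of Theorem~\ref{4.8}, the only content being the elementary remark that ``$\Phi$ has specification'' upgrades to ``$\Phi$ has two distinct specification points'' as soon as $|X|\geq 2$. If one wanted to be slightly more self-contained, one could instead inline the relevant portion of the proof of Theorem~\ref{4.8}: fix a finite symmetric generating set $G_{1}$ containing an infinite-order element $s$, choose $U\in\mathcal{U}^{s}$ with $(x,y)\notin U^{2}$ (possible by Hausdorffness), and run the same construction of $2^{n}$ many $(nM,U)$-separated tracing points for the sets $\Lambda_i=\{s^{c(U)(i-1)}\}$, obtaining $h(G_{1},\Phi,\mathcal{U})\geq \tfrac{\log 2}{M}>0$. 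Either route works; the first is shorter and is the one I would present.
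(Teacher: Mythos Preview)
Your proposal is correct and matches the paper's intended argument: the corollary is stated without proof immediately after Theorem~\ref{4.8}, and the only step is precisely your observation that specification makes every point a specification point, so $|X|\geq 2$ supplies two distinct specification points to which Theorem~\ref{4.8} applies.
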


\begin{Example}
Let $G$ be any finite group of cardinality $\mathcal{C}$ and $X$ be arbitrary uniform space. If $\Phi\in Act(G, X)$, then $\Phi$ has specification. Now assume $X = \lbrace x_{n}\rbrace_{n=1}^{\infty} $, where $x_{n} =\sum_{i=1}^{n}(\frac{1}{i})$ with uniformity inherited from Euclidean metric. So it is a discrete uniformity. Let $D_{x}$ be an entourage such that $D_{x}[x]=\lbrace x\rbrace$. Let $\Phi$ be the trivial action. Then $\Phi$ has specification but does not have transitivity. Note that any compact subset $\mathfrak{K}$ is a set having finitely many elements. Thus we have $s_{n}(U, \mathfrak{K},\phi)\leq card(\mathfrak{K})$ for every $n\in \field{N}$ which implies $s_{\Phi}(U, \mathfrak{K}) = 0$ for every $U$ and $\mathfrak{K}\in \mathcal{K}(X)$. Hence, $\Phi$ has zero entropy. Therefore theorem \ref{4.5} and Corollary \ref{4.9} do not hold for finite groups.
\label{4.10}
\end{Example}

\begin{Example}
Let $G=\field{Z}^{2}$ with basis $\lbrace e_{1}=(1,0), e_{2}=(0,1), e_{3}=(-1,0), e_{4}=(0,-1)\rbrace$ and $X = \field{R}$ with uniformity generated by the Euclidean metric. Define $\Phi\in Act(G,X)$ by $\Phi_{e_{1}}(x) = x + 2$ and $\Phi_{e_{2}}(x) = x - 2$. 
Since $\Phi$ is not transitive, by Theorem \ref{4.5} $\Phi$ does not have specification. 
\label{4.11}
\end{Example}

\begin{Example}
Let $G=\field{Z}^{2}$ with generator $\lbrace e_{1}=(1,0), e_{2}=(0,1), e_{3}=(-1,0), e_{4}=(0,-1)\rbrace$ and $X = \lbrace x_{n}\rbrace_{n=1}^{\infty} $, where $x_{n} =\sum_{i=1}^{n}(\frac{1}{i})$. Suppose that $X$ has the discrete uniformity generated by the Euclidean metric. Let $D_{x}$ be an entourage such that $D_{x}[x]=\lbrace x\rbrace$. Let $\Phi$ be the trivial action. One can easily check that entropy of $\Phi$ is zero, $\Phi$ is not transitive and hence, does not have specification.   
\label{4.12}
\end{Example}

\begin{Example}
Let $G=\field{Z}^{2}$ with generator $\lbrace e_{1}=(1,0), e_{2}=(0,1), e_{3}=(-1,0), e_{4}=(0,-1)\rbrace$ and $X = \field{R}$ with uniformity generated by euclidean metric. Define $\Phi\in Act(G,X)$ by $\Phi_{e_{1}}(x) = 2x$ and $\Phi_{e_{2}}(x) = (1/2)x$. Since $\Phi_{e}(x)$ has entropy equal to $log\hspace*{0.1cm} 2$ \cite{SDD}, by Remark \ref{3.3} we get that $\Phi$ has entropy atleast $log\hspace*{0.1cm} 2$. Since $\Phi$ is not transitive, by Theorem \ref{4.5} $\Phi$ does not have specification. Hence, converse of Corollary \ref{4.9} is not true.   
\label{4.13}
\end{Example}

\begin{Example}
Let $\Phi\in Act(\mathbb{Z}, X)$ be an action generated by homeomorphism $f$ on $X$ i.e. $\Phi_{n}(x) = f^{n}(x)$. 
It is easy to see that Definition \ref{4.1} implies the classical definition of specification for homeomorphism.  Following example proves that the converse is not true.
Let $X_{i} = \lbrace 0, 1\rbrace$ be equipped with the discrete metric for all $i\in \mathbb{Z}$. Let $X = \prod_{i\in \mathbb{Z}}X_{i}$ be equipped with the  metric $D(x, y) = \sum_{i\in \mathbb{Z}}\frac{d(x_{i}, y_{i})}{2^{|i|}}$. 
Let $f$ be a left shift map on $X$ i.e $f(x_{n}) = x_{n+1}$, where $x = (x_{n})_{n\in \mathbb{Z}}$. Clearly, $f$ has specification \cite{S}. We claim that $\Phi$ does not have specification. Let $\delta = \frac{1}{2}$. 
If $d(x, y) < \delta$ then $(x)_{i} = (y)_{i}$ for all $-1\leq i\leq 1$ and if $d(x, y) < 2\delta$ then $(x)_{0} = (y)_{0}$. For each $j\in \mathbb{N}$, set $\Lambda_{1}^{j} = \lbrace 1, 1+1,..., 1+(j+1)\rbrace$ and $\Lambda_{2}^{j} = \lbrace 1+ (j+1), 1+(j+2),..., 1+(j+1) + (j+1)\rbrace$. For generating set $S = \lbrace 1, -1\rbrace$, it is easy to see that $d_{G}(\Lambda_{1}^{j}, \Lambda_{2}^{j}) = j+1 > j$ for all $j\in \mathbb{N}$. 
For each $j\in \mathbb{N}$, choose $x_{1}^{j}, x_{2}^{j}$ such that $(x_{1}^{j})_{1+(j+1)} \neq (x_{2}^{j})_{1+(j+1)}$. On contrary, choose an integer $c(\delta) = k > 0$ by specification of $\Phi$. 
Choose $x\in X$ be tracing point by specification property corresponding to $\lbrace \Lambda_{i}^{k}\rbrace_{i=1}^{2}$ and $\lbrace x_{i}^{k}\rbrace_{i=1}^{2}$. Then $d(f^{1+(k+1)}(x), f^{1+(k+1)}(x_{1}^{k})) < \delta$ and $d(f^{1+(k+1)}(x), f^{1+(k+1)}(x_{2}^{k})) < \delta$, which implies $d(f^{1+(k+1)}(x_{1}^{k}), f^{1+(k+1)}(x_{2}^{k})) < 2\delta = 1$. This holds only when $(x_{1}^{k})_{1+(k+1)} = (x_{2}^{k})_{1+(k+1)}$, which is a contradiction. Hence $\Phi$ does not have the specification property. 
\end{Example}

\textbf{Acknowledgements:} First author is supported by CSIR-Junior Research Fellowship (File No.-09/045(1558)/2018-EMR-I) of  Government of India.

\end{document}